\newtheorem{theorem}{Theorem}[section]
\theoremstyle{definition}
\newtheorem{definition}[theorem]{Definition}
\newtheorem{lemma}[theorem]{Lemma}
\newtheorem{conjecture}{\bf Conjecture}
\newtheorem{proposition}{\bf Proposition}
\begin{document}

\title{Detecting Causality with Symplectic Quandles}
\author{Ayush Jain}
\date{August 2023}

\author{\noindent Ayush Jain\\
\small{The Shri Ram School Aravali, India}\\
\small{\texttt{ayushj2007@gmail.com}}}

\vskip 1em

\date{\today}

\maketitle

\begin{abstract}
\bigskip
We investigate the capability of Symplectic quandles to detect causality for (2+1)-dimensional globally hyperbolic spacetimes (X). Allen and Swenberg showed that Alexander-Conway polynomial is insufficient to distinguish connected sum of two Hopf links from the links in the family of Allen-Swenberg 2-sky like links, suggesting that it can not always detect causality in X. We find that symplectic quandles, combined with Alexander-Conway polynomial, can distinguish these two type of links, thereby suggesting their ability to detect causality in X. The fact that symplectic quandles can capture causality in the Allen-Swenberg example is intriguing since the theorem of Chernov and Nemirovski, which states that Legendrian linking equals causality, is proved using Contact Geometry methods.\\

 \textbf{Key Words: }Knot Theory, finite quandles, symplectic quandles, globally hyperbolic spacetimes, causality in spacetime, link invariants, polynomial knot invariants. 
\end{abstract}


\section{Introduction}
\emph{Knot} is a smooth embedding of a circle $S^{1}$ into a 3 dimension Euclidean space, $\mathbb{R}^3$. Two knots are considered equivalent if they can be transformed into each other using \emph{Reidemeister moves}. The simplest knot, which consists of a plain circle in the space, is known as the \emph{trivial knot} or an \emph{unknot}. When one or more knots are placed into $\mathbb{R}^3$, we obtain a \emph{link}. \emph{Hopf link} is the simplest non-trivial link.\\

A \emph{knot invariant} is a function that always assigns the same values to equivalent knots or links however different knots may have the same value. Knot (or link) invariants are important structures in the study of causally related points in a globally hyperbolic spacetimes. The \emph{Low Conjecture} \citep{Low88} posits that two events in a (2+1)-dimensional globally hyperbolic spacetimes are causally related if their skies are topologically linked. Natàrio and Tod \citep{Natario04} extended the conjecture to higher dimensional spacetimes and proposed the \emph{Legendrian Low Conjecture}, which claims that two points in a $(3+1)$-dimensional globally hyperbolic spacetime with a Cauchy surface $\mathbb{R}^3$ are causally related if their skies are Legendrian Linked. Chernov and Nemirovski \citep{CN10} proved both these conjectures, demonstrating that the relationship between causality and linking holds if the associated Cauchy surface $\Sigma$ is not coverable by $S^2$ or $S^3$. These results open up the possibility that link invariants can detect the causality of two events in the skies. Natàrio and Tod \citep{Natario04} paired several causally related events and discovered that the link associated with each pair has a non-trivial Kauffman polynomial that can easily be derived from Jones polynomial of a single variable. Nevertheless, their results are confined to a specific class of skies. Chernov, Martin and Petkova \citep{CMP20} indicated that more informative link invariants, such as \emph{Khovanov homology} (a "categorification" of the Jones Polynomial) and \emph{Heegard Floer homology} (a "categorification" of the Alexander-Conway polynomial), can detect causality in (2+1)-dimensional globally hyperbolic spacetime with $\Sigma \neq S^2, \mathbb{R} P^2$. However, the polynomial invariants themselves are weaker versions of these homologies. Allen and Swenberg \citep{AS20} presented empirical evidence suggesting that Jones Polynomial can detect causality in $2+1$-dimensional globally hyperbolic spacetime but the Alexander Conway Polynomial failed to distinguish between the connected sum of two Hopf links — which corresponds to the link of skies of two causally unrelated events — and the Allen-Swenberg links which are likely skies of non-causally related events, implying the Alexander-Conway polynomial's inability to detect causality. \\

A \emph{quandle} is an algebraic structure that satisfies axioms analogous to Reidemeister moves, making it an invariant of knots and links. In \citep{Joyce82}, Joyce associated a quandle $Q$ over a finite set with each link $L \subset \mathbb{R}^3$ and illustrated that this \emph{knot quandle} $(Q(L))$ (referred to as a fundamental quandle) determines the knot type up to orientation-preserving homeomorphism of $(L, \mathbb{R}^3)$. Matveev \citep{Mat84} employed a similar structure as a knot invariant and called it distributive groupoids. A useful method of obtaining a computable knot invariant is computing the cardinality of the homomorphism set of the fundamental quandle $Q(L)$ into another fixed quandle $T$. In \citep{Nels08a} and \citep{Nels08b}, Nelson proposed another invariant termed \emph{enhanced quandle counting polynomial}, which extracts more information from the homomorphism set by counting the cardinalities of image subquandles for each $f \in Hom(Q(L),T)$. The Symplectic Quandle \citep{Nels08b} usually has connected non-trivial subquandles, making it suitable for distinguishing links using enhanced quandle counting polynomial. \\

Recent studies have tried to examine if augmenting the Alexander-Conway polynomial with quandle invariants can detect causality in (2+1)-dimensional globally hyperbolic spacetime. Leventhal \citep{Lev23} found that enhancing the Alexander-Conway polynomial with the affine Alexander Quandle isn't adequate for causality detection. This limitation similarly applies to Takasaki quandle, given that it is a special Alexander Quandle with $t=-1$. This paper probes whether the Symplectic Quandle combined with the Alexander-Conway polynomial can detect causality. We find that it possibly can detect causality using the enhanced quandle counting polynomial invariants. \\

The paper is organized as follows: Section 2 recalls important definitions and theorems pertaining to causality in spacetime. Section 3 offers a brief overview of quandles as link invariants. Section 4 defines the symplectic quandle and its properties. Section 5 describes our results. 

\section{Causality in Spacetime}
A \emph{spacetime} $X$ is a time-oriented Laurentz Manifold with an operation known as the \emph{Laurentz Dot Product}, which is defined in each tangent space as follows:
$$(x_1, x_2, \dots, x_n, t_1) \cdot (y_1, y_2, \dots, y_n, t_2) = x_1y_1 + x_2y_2 + \dots + x_ny_n - t_1t_2$$

Two points (events) $x,y \in X$ are causally related if there is a curve $\gamma$ connecting the two points such that $\gamma^{'}(t) \cdot \gamma^{'}(t) \leq 0$, meaning the dot product of the velocity vector with itself must be timelike or null. In other words, one can travel from $x$ to $y$ without exceeding the speed of light. \\

A \emph{Cauchy surface} $\Sigma$ is defined as a subset of the spacetime which is crossed exactly once by every possible maximal causal path as defined above. A spacetime is called \textit{globally hyperbolic} if it has a Cauchy surface. 

\begin{theorem}[Geroch \citep{Ger70}, Bernal and Sanchez \citep{BS03}]
Globally hyperbolic spacetimes are continuous and differentiable equivalent to $\Sigma \times \mathbb{R}^n$ with the $\mathbb{R}$ coordinate being a timelike function and each $\Sigma \times t$ being a Cauchy surface. 
\end{theorem}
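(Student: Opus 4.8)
The plan is to follow the two-stage historical route that these citations represent: first establish a continuous (topological) splitting in the manner of Geroch, and then upgrade it to a smooth splitting in the manner of Bernal and Sanchez. Throughout, write $J^{+}(p)$ and $J^{-}(p)$ for the causal future and past of an event $p \in X$, and recall that global hyperbolicity is equivalent to the statement that $X$ contains no closed causal curves and that every \emph{causal diamond} $J^{+}(p) \cap J^{-}(q)$ is compact.

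First I would construct a global time function by a measure-theoretic argument. Fix a finite Borel measure $m$ on $X$ with $m(X) = 1$, obtained by patching together Lebesgue measures in coordinate charts with a partition of unity and then normalizing. Define
$$t^{-}(p) = m\bigl(J^{-}(p)\bigr), \qquad t^{+}(p) = m\bigl(J^{+}(p)\bigr), \qquad \tau(p) = \frac{t^{-}(p)}{t^{+}(p)}.$$
The heart of this step is to show, using the compactness of causal diamonds and the absence of closed causal curves, that $t^{\pm}$ are continuous and that $\tau$ is strictly increasing along every future-directed causal curve, and moreover that $\tau \to 0$ along any past-inextendible such curve while $\tau \to \infty$ along any future-inextendible one. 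A level set $\Sigma = \tau^{-1}(c)$ is then met exactly once by each inextendible causal curve, so it is a Cauchy surface, and all level sets are mutually homeomorphic.

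Next I would assemble the splitting. Choose a smooth future-directed timelike vector field $V$ on $X$, which exists by time-orientability, and rescale it to be complete and to raise $\tau$ at unit rate. Since each integral curve of $V$ is a future-directed timelike path, it crosses every level set exactly once; sending a point $p$ to the pair consisting of its $\tau$-value and the point at which its integral curve meets the fixed slice $\Sigma$ then yields the homeomorphism $X \cong \Sigma \times \mathbb{R}$, with the $\mathbb{R}$-factor a timelike coordinate and each slice a Cauchy surface.

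The main obstacle, and the genuine content of the Bernal--Sanchez refinement, is that Geroch's function $\tau$ is only continuous, so the map just described is a priori merely a homeomorphism and the slices are only topological Cauchy surfaces. To obtain the differentiable conclusion I would replace $\tau$ by a smooth \emph{temporal} function $T$ whose gradient is everywhere past-directed timelike; the difficulty is that naive convolution smoothing destroys the crucial property that $T$ strictly increase along every causal curve. The resolution is to build $T$ as a carefully controlled, locally finite sum of smoothed step functions, each adapted to a causal diamond, verifying at each stage that the timelike-gradient condition is preserved. The flow of $\nabla T$ then supplies the diffeomorphism $X \cong \Sigma \times \mathbb{R}$ with smooth spacelike Cauchy slices. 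This smoothing is the hard analytic step, whereas the topological skeleton follows cleanly from the compactness built into global hyperbolicity.
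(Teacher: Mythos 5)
The paper offers no proof of this theorem at all: it states it as a cited result, attributing the continuous splitting to Geroch and the smooth, spacelike refinement to Bernal and Sanchez, which is exactly the two-stage route you reconstruct. Your sketch --- Geroch's volume functions $t^{\pm}(p) = m\bigl(J^{\pm}(p)\bigr)$ producing a continuous time function whose level sets are Cauchy surfaces and whose flow gives the homeomorphism $X \cong \Sigma \times \mathbb{R}$, followed by the Bernal--Sanchez construction of a smooth temporal function with everywhere timelike gradient to upgrade this to a diffeomorphism with smooth spacelike slices --- is a faithful outline of precisely the cited arguments, so it takes essentially the same approach as the paper.
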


Geroch \citep{Ger70} established homeomorphism from the Cauchy surface to the spacetime. Bernal and Sanchez \citep{BS03} demonstrated that the Cauchy Surface can assumed to be smooth and spacelike. 

\begin{theorem}[Bernal and Sanchez \citep{BS07}]
    To maintain global hyperbolicity, a spacetime must satisfy the following two conditions:
    \begin{itemize}
        \item Absence of time travel i.e. absence of closed causal loops
        \item for all $x, y \in X$, $J^+(x) \cap J^-(y)$ is compact where $J^{+}(x)$ and $J^-(y)$ are causal future and causal past, respectively.
    \end{itemize}
\end{theorem}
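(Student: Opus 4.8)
The plan is to derive both conditions from the existence of a Cauchy surface, leaning on the product decomposition $X \cong \Sigma \times \mathbb{R}$ furnished by Theorem 2.1, in which the $\mathbb{R}$-coordinate is a temporal function $\tau$ whose level sets $\Sigma \times \{t\}$ are Cauchy surfaces. Because $\tau$ is timelike, its value must strictly increase along every future-directed causal curve. First I would rule out closed causal loops: if $\gamma$ were such a loop, then traversing it would return $\tau$ to its starting value, contradicting strict monotonicity; equivalently, a closed causal curve would meet the Cauchy surface $\Sigma \times \{0\}$ either never or more than once, violating the defining ``crossed exactly once'' property. This step is essentially immediate.

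For the compactness of $J^+(x) \cap J^-(y)$, I would argue in two stages. Monotonicity of $\tau$ along causal curves confines the diamond to the slab $\Sigma \times [\tau(x), \tau(y)]$, giving boundedness in the time direction. Closedness and spatial boundedness would then come from a limit-curve argument: given any sequence $p_n \in J^+(x) \cap J^-(y)$, choose future-directed causal curves from $x$ through $p_n$ to $y$; the limit curve theorem, together with the Cauchy property of $\Sigma$, yields a convergent subsequence whose limit is again a causal curve from $x$ to $y$, so its trace through the limiting point certifies that the limit of $p_n$ lies in the diamond. Combined with confinement to the slab, this exhibits $J^+(x) \cap J^-(y)$ as a closed subset of a compact set, hence compact.

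I expect the genuine difficulty to sit in the converse --- the content that makes this an \emph{if and only if} characterization in the sense of Bernal and Sanchez --- namely recovering a Cauchy surface from causality together with compact diamonds. The classical route is Geroch's construction: fix a finite measure $m$ on $X$ obtained by normalizing a volume form, set $t^{\pm}(p) = m\big(J^{\pm}(p)\big)$, and form the time function $\tau = \log\!\big(t^-/t^+\big)$, whose regular level sets are the sought Cauchy surfaces. The main obstacle is that continuity of $t^{\pm}$ and the Cauchy property of these level sets classically require \emph{strong} causality, whereas the theorem asks only for causality; the heart of the Bernal--Sanchez refinement is a limit-curve argument showing that causality plus compactness of all diamonds already upgrades to strong causality, ruling out the ``almost-closed'' causal curves that would otherwise obstruct the construction.
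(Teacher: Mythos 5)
The paper itself contains no proof of this statement: it is quoted as background and attributed directly to Bernal and S\'anchez \citep{BS07}, so there is no internal argument to compare yours against. Judged on its own terms, your sketch follows the standard literature route and is essentially correct. The necessity direction via the splitting $X \cong \Sigma \times \mathbb{R}$ of Theorem 2.1 is right: strict monotonicity of the temporal function rules out closed causal curves, and slab confinement plus a limit-curve argument gives compactness of $J^+(x) \cap J^-(y)$. One caveat there: the limit curve theorem does not by itself hand you a convergent subsequence of the points $p_n$; you must first secure equicontinuity and pointwise boundedness, e.g.\ by parametrizing the connecting causal curves with the time function and bounding their lengths uniformly with respect to a complete product Riemannian metric on the slab $\Sigma \times [\tau(x),\tau(y)]$, using that each curve meets every Cauchy slice exactly once --- as sketched, ``the Cauchy property of $\Sigma$'' is doing unacknowledged work. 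More importantly, you correctly locate the actual content of the cited theorem: Geroch's construction $\tau = \log\bigl(t^-/t^+\bigr)$ classically needs strong causality (for continuity of $t^{\pm}$ and the Cauchy property of the level sets), and the contribution of \citep{BS07} is precisely that causality together with compact diamonds already implies strong causality, proved by a limit-curve contradiction that manufactures a closed causal curve. Two small corrections: as printed, the statement asserts only necessity (``must satisfy''), whereas your proof treats --- appropriately, since this is the real theorem --- the full equivalence; and Geroch's time function is merely continuous, so ``regular level sets'' is a misnomer: every level set is a Cauchy surface, with smoothness supplied only by the later Bernal--S\'anchez temporal-function results invoked in Theorem 2.1.
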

The second condition is known as the \textit{absence of naked singularities}.\\

By using the space of the spherical contangent bundle $ST*\Sigma$ of a Cauchy surface $\Sigma$, which for $\Sigma = \mathbb{R}^2$ appears like a solid torus, we can find the space of future-directed light rays, termed null geodiscs, $N$ in our globally hyperbolic spacetime.  We get a sphere of light rays passing through a point $x$, called the \emph{sky} of $x$. Low's conjecture establishes the relationship between the causality and the link.

\begin{theorem}[Low's Conjecture \citep{Low88}]
Assume that the universal cover of a smooth spacelike Cauchy surface of a globally hyperbolic (2+1)-dimensional spacetime $(X, g)$ is diffeomorphic to $\mathbb{R}^2$. Then the skies of causally related points in X are topologically linked.
\end{theorem}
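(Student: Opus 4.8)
The plan is to pass to the space of null geodesics and to detect causality by a linking-type invariant. First I would use Theorem 2.1 to write $X \cong \Sigma \times \mathbb{R}$ and, by hypothesis, pass to the universal cover so that $\Sigma \cong \mathbb{R}^2$. The space $N$ of future-directed null geodesics is then identified with the spherical cotangent bundle $ST^*\Sigma$, a smooth $3$-manifold diffeomorphic to the open solid torus $\mathbb{R}^2 \times S^1$, equipped with the projection $\pi : N \to \Sigma$ recording where each ray meets the Cauchy surface. For an event $x \in X$ the sky $S_x \subset N$ is the circle of light rays through $x$, an embedded $S^1$; in this model $\pi(S_x)$ is the trace of the light cone of $x$ on $\Sigma$ (a point if $x \in \Sigma$, a circle otherwise), and $S_x$ wraps once around the $S^1$ fibre, so that $[S_x]$ generates $H_1(N) \cong \mathbb{Z}$.

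The strategy then rests on one geometric fact: $S_x \cap S_y \neq \varnothing$ if and only if $x$ and $y$ lie on a common null geodesic, i.e. are null-related. Hence if $x$ and $y$ are strictly timelike-related the two skies are disjoint and constitute a genuine two-component link in $N$, and it remains to show this link is nontrivial. Because each sky is homologically essential, I cannot use a naive Seifert-surface linking number; instead I would define the invariant by lifting to the universal cover $\mathbb{R}^2 \times \mathbb{R}$ through the fibration structure, which in the Minkowski model reduces to the winding (intersection) number measuring whether $\pi(S_y)$ encircles $\pi(S_x)$.

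Next I would run a deformation argument. Fixing $x$, I slide $y$ along a generic path from a reference, manifestly spacelike-separated position $y_0$, where the skies are visibly separated and $\mathrm{lk}(S_x, S_{y_0}) = 0$, to its actual position in $I^+(x)$. Each time the moving point crosses the light cone of $x$ the two skies cross transversally in a single point and $\mathrm{lk}$ jumps by $\pm 1$; counting signed crossings, the net change is odd, since the boundary of the causal future $\partial J^+(x)$ is traversed once. Therefore $\mathrm{lk}(S_x, S_y) \neq 0$, and the skies are topologically linked.

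The hard part will be two things. First, establishing that this linking invariant is genuinely well defined and isotopy-invariant for homologically essential circles, which requires pinning down the precise topology of $N$ and the homotopy class of skies inside $N \setminus S_y$. Second, and more fundamentally, justifying why the deformation cannot be unwound: this is exactly where the hypothesis $\Sigma \cong \mathbb{R}^2$ is indispensable. On Cauchy surfaces coverable by $S^2$ the null geodesics refocus, so skies reconverge and causally related events can have \emph{unlinked} skies; ruling out refocusing is the crux, and it is where Low's elementary crossing-count must be upgraded to the contact-geometric and Legendrian machinery of Chernov and Nemirovski. The remaining technical points, genericity and transversality of the crossings and the computation of the sign of each $\pm 1$ jump from a local light-cone model, are routine by comparison.
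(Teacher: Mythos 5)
The paper does not prove this statement: it appears as background, quoted from the literature, and the paper simply records that Low's conjecture was proved by Chernov and Nemirovski \citep{CN10} using contact-geometric methods, for Cauchy surfaces not homeomorphic to $S^2$ or $\mathbb{R}P^2$. So there is no in-paper proof to compare against, and your proposal has to stand on its own. It does not: it reproduces Low's original heuristic picture and defers precisely the step that constitutes the theorem.

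Your setup is sound --- identifying $N$ with $ST^*\Sigma \cong \mathbb{R}^2 \times S^1$, skies as embedded circles generating $H_1(N) \cong \mathbb{Z}$, and the fact that $S_x \cap S_y \neq \varnothing$ exactly when $x$ and $y$ are null-related. The genuine gap is the deformation argument. A generic path from a spacelike-separated $y_0$ to $y \in I^+(x)$ may cross $\partial J^+(x)$ any number of times, so ``traversed once'' is simply false in general; what you need is that the \emph{signed} sum of the $\pm 1$ jumps in $\mathrm{lk}$ is nonzero, and nothing in your sketch controls the signs. If inward and outward crossings could cancel in pairs, the argument collapses; ruling this out amounts to a causal monotonicity statement for sky isotopies (a nonnegative, i.e.\ causally directed, isotopy can only change the invariant in one direction), and that is essentially the nonnegative Legendrian isotopy result of Chernov and Nemirovski --- it is the entire proof, not a ``routine'' local sign computation. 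You concede this yourself when you write that the crossing count ``must be upgraded to the contact-geometric and Legendrian machinery of Chernov and Nemirovski,'' but a proof that ends by invoking the theorem it is trying to prove is a restatement of the problem. A secondary unresolved point is well-definedness: for two homologically essential circles in the solid torus there is no Seifert-surface linking number, and your lift to the universal cover produces noncompact lifts (lines), so defining $\mathrm{lk}$ requires a choice of reference (e.g.\ a framing or a fixed fiber) together with an argument that the result is an invariant of the two-component link rather than of the chosen deformation; you flag this difficulty but do not resolve it. In short, the proposal is a faithful account of why the statement was conjectured, not a proof of it.
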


Essentially, the conjecture posits that the study of Causality can be associated with the study of links in the solid torus. It says that two events $x, y$ are causally related if and only if the resulting link from the skies $S_x \sqcup S_y$ is non-trivial. This theorem was proved by Chernov and Nemirovski \citep{CN10}, where they showed that the relation holds if the Cauchy surface $\Sigma$ is not homeomorphic to $S^2$ or $\mathbb{R}P^2$. 

\begin{theorem}[Legendrian Low's Conjecture]
Two points $x$ and $y$ in a $(3 + 1)$ dimension globally hyperbolic space with Cauchy surface $\mathbb{R}^3$ are causally related if their skies are Legendrian linked.
\end{theorem}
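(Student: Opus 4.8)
The plan is to convert the causality question into a statement about two Legendrian $2$-spheres in a contact $5$-manifold, and then to split the problem into an elementary implication and a genuinely contact-topological one. First I would use the structure theorem stated above (Geroch; Bernal and Sanchez) to fix the splitting $X \cong \mathbb{R}^3 \times \mathbb{R}$ with spacelike Cauchy surface $\Sigma = \mathbb{R}^3$, so that the space $N$ of future-directed null geodesics is a contact $5$-manifold contactomorphic to the spherical cotangent bundle $ST^*\mathbb{R}^3 \cong \mathbb{R}^3 \times S^2$. Under this identification the sky $S_x$ of an event lying on the Cauchy surface is exactly the cotangent fibre $\{x\} \times S^2$, a Legendrian sphere; for a general event the sky is the Legendrian sphere obtained by propagating a fibre under the geodesic (wave-front) flow. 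The entire question now reads: when are the Legendrian spheres $S_x, S_y \subset N$ Legendrian linked?

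Since the statement reads ``causally related if Legendrian linked,'' I would prove it through its contrapositive: if $x$ and $y$ are \emph{not} causally related, their skies are Legendrian \emph{un}linked. The key observation is that a causally unrelated pair is acausal, so by the Bernal--Sanchez smoothing one may extend $\{x,y\}$ to a single smooth spacelike Cauchy surface $\Sigma'$ through both events. Re-identifying $N \cong ST^*\Sigma'$ via this surface presents both skies simultaneously as fibres $\{x\}\times S^2$ and $\{y\}\times S^2$ over distinct points, which sit in disjoint Darboux neighbourhoods and are therefore the standard Legendrian unlink. This yields the stated implication using essentially no contact topology beyond the normal form for cotangent fibres.

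To make the equivalence sharp — and this is the part I regard as the real content of the Legendrian Low conjecture — I would also prove the converse, that $x \le y$ forces $S_x$ and $S_y$ to be Legendrian linked. The dictionary here is that sliding $x$ toward $y$ along a causal curve propagates $S_x$ by a \emph{nonnegative} Legendrian isotopy (contact Hamiltonian $\ge 0$), and a future-directed null geodesic from $x$ to $y$ makes the propagated sky meet $S_y$. The difficulty is that a nonnegative isotopy producing such an intersection does not by itself obstruct a subsequent Legendrian isotopy that unlinks the spheres; one must show the linking can never be undone.

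This non-removability is where I expect the whole argument to concentrate, and it is exactly the point at which the hypothesis that $\Sigma = \mathbb{R}^3$ is not covered by a sphere must be used. The plan is to invoke the theory of nonnegative Legendrian isotopies and the associated partial order on the universal cover of the identity component of the contactomorphism group of $ST^*\mathbb{R}^3$ — the \emph{orderability} phenomenon of Eliashberg--Kim--Polterovich and its relatives — to certify that the positive isotopy coming from a causal curve is non-contractible and cannot be reversed. Orderability would guarantee that the Legendrian linking carried by causally related skies survives every Legendrian isotopy, closing the equivalence. Constructing (or correctly citing) the appropriate orderability invariant in this non-compact $5$-dimensional setting and checking its nonvanishing on skies is the main obstacle; everything else is the geometric translation set up in the first two steps.
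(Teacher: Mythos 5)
The first thing to say is that the paper does not prove this statement at all: it appears as background, attributed to Nat\`ario and Tod \citep{Natario04} for the conjecture and to Chernov and Nemirovski \citep{CN10} for the proof. So your proposal can only be measured against the literature, and at the level of architecture it reproduces the Chernov--Nemirovski proof faithfully: identify the space $N$ of future-directed null geodesics with $ST^*\mathbb{R}^3$, realize skies as Legendrian $2$-spheres (fibres, for events on a Cauchy surface), handle the stated implication by contraposition via placing an acausal pair on a common smooth spacelike Cauchy surface (note this needs the Bernal--Sanchez \emph{extension} theorem for acausal submanifolds, not merely the splitting theorem \citep{BS03} you invoke), and obtain the hard implication from nonnegative Legendrian isotopies together with orderability, i.e.\ the non-existence of positive loops of Legendrians in $ST^*\mathbb{R}^3$. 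You also correctly observe that the theorem as printed (``causally related \emph{if} Legendrian linked'') is, after contraposition, only the elementary direction, while the genuine contact-topological content lies in the converse you volunteer; this matches the paper's own remark that the Chernov--Nemirovski theorem is proved by contact-geometric methods.

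Two substantive caveats keep this a road map rather than a proof. First, your justification of unlinkedness --- ``they sit in disjoint Darboux neighbourhoods and are therefore the standard Legendrian unlink'' --- is not a valid principle: Legendrian knotting is a local phenomenon, so disjointness of Darboux balls does not by itself trivialize a Legendrian link. What actually closes this step is that in the Chernov--Nemirovski framework ``Legendrian unlinked'' is \emph{defined} as Legendrian isotopic to a pair of fibres over distinct points of a Cauchy surface, and one then checks independence of the chosen Cauchy surface (the space of smooth spacelike Cauchy surfaces is connected, supplying the required Legendrian isotopy); no Darboux argument is involved. Second, the orderability input you plan to ``invoke'' is not an off-the-shelf citation: the Eliashberg--Kim--Polterovich results do not cover $ST^*\mathbb{R}^3$, and the absence of positive Legendrian loops in this setting (for Cauchy surfaces with universal cover $\mathbb{R}^3$, which is where the hypothesis $\Sigma=\mathbb{R}^3$ enters) is precisely the technical heart of \citep{CN10}, proved there via generating families. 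In other words, the step you defer is the theorem itself; within this paper's context, where the result is only cited, your sketch is an accurate and well-organized account of the known strategy, but it should be presented as such rather than as a self-contained proof.
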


The conjecture was postulated by Natàrio and Tod \citep{Natario04} and was later proved by Chernov and Nemirovski \citep{CN10}. 

\section{Quandles as Invariants of Knots and Links}

\begin{definition}
A quandle is a non empty set $X$ with a binary operation $\triangleright$ that satisfies the following axioms:
    \begin{itemize}
        \item $x \triangleright x = x$ for all $x \in X$
        \item Right multiplication by $x$ is bijective for all $x$. Specifically, for elements $x, y \in X$, there exists an element $z \in X$ such that $x = y \triangleright z$
        \item $(x \triangleright y) \triangleright z = (x \triangleright z) \triangleright (y \triangleright z)$ for all $x,y,z \in X$
    \end{itemize}
    
\end{definition}
\noindent The first axiom indicates that quandles are idempotent. The second axiom state the existence of right inverse, $\triangleright^{-1}$, such that $(x \triangleright y) \triangleright^{-1} y = x$. The third axiom highlights that quandles are right self-distributive. Moreover, quandles are generally non-commutative and non-associative, meaning $x \triangleright y \neq y \triangleright x$ and $(x \triangleright y) \triangleright z \neq x \triangleright (y \triangleright z)$. It is also important to note that quandle axioms cover the Reidmeister moves, making it a good algebraic invariant to study knots and links (see \citep{Nel04}).\\

Joyce, in \citep{Joyce82}, delineated the relationship between knots and quandles, where arcs are denoted by quandle elements and the relationship between arcs at each crossing is represented by the quandle operation, as shown in figure \ref{fig:crossing} below:
\begin{figure}[h]
    \centering
    \includegraphics[width = 10cm]{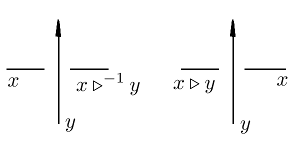}
    \caption{The Quandle Crossing Relations}
    \label{fig:crossing}
\end{figure}
\FloatBarrier
The left diagram in Figure \ref{fig:crossing} illustrates a negative crossing, where the arc $x$ passes underneath $y$ from the left resulting in arc $x \triangleright^{-1} y$. Conversely, the right diagram depicts a positive crossing, where the arc $x$ passes underneath $y$ from right to left, resulting in arc $x \triangleright y$. We will use the positive crossing to represent the relationship between knots and quandles. \\

The Fundamental Quandle, $Q(L)$, of an oriented knot or a link ($L$) is the quandle generated by arcs $L$ with relationship defined at each crossing as shown in Figure \ref{fig:crossing}. 

\begin{definition}
Given an oriented link $L$ with arc-set $A$, the fundamental quandle $Q(L)$ of $L$ is a quandle on $A$ with the quandle operation defined by crossing relations. 
\end{definition}

Joyce \citep{Joyce82} demonstrated that the fundamental quandle is a \emph{complete invariant} of knots up to an ambient homeomorphism. 

\begin{definition}
Let $X_1$ and $X_2$ be two quandles with operation $\triangleright$. A function $\varphi : X_1 \mapsto X_2$ is a quandle homomorphism if $\varphi(x \triangleright y) = \varphi(x) \triangleright \varphi(y)$ for all $x, y \in X_1$.
\end{definition}

\begin{theorem}[Joyce \citep{Joyce82}] 
There exists a homeomorphism $f: S^3 \rightarrow S^3$ taking an oriented knot $K$ to another oriented knot $K^{'}$ if and only if the fundamental quandles $Q(K)$ and $Q(K^{'})$ are isomorphic. 
\end{theorem}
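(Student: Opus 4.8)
The plan is to prove the two directions separately. The forward implication is essentially functoriality of the quandle construction, while the reverse implication is the substantive half: it reconstructs the knot from the abstract quandle by passing through the knot group with its peripheral structure and then invoking rigidity results for knot complements.

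For the ``only if'' direction, I would first replace the diagrammatic definition of $Q(K)$ by Joyce's equivalent intrinsic description. Fix a basepoint $p \in S^3 \setminus K$ and a tubular neighborhood $N(K)$; an element of $Q(K)$ is then a homotopy class (in the complement) of a \emph{noose}, namely a path from $p$ to $\partial N(K)$ followed by a small meridian loop, and the operation $a \triangleright b$ is conjugation of the noose $a$ by the meridian of $b$. In this form $Q(K)$ manifestly depends only on the homeomorphism type of the pair $(S^3, K)$, and one checks it agrees with the arc-and-crossing definition of Section~3 by a Wirtinger-type argument. Consequently, a homeomorphism $f$ with $f(K) = K'$ carries nooses to nooses, preserves meridians and the conjugation operation, and therefore induces a quandle isomorphism $Q(K) \to Q(K')$.

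For the ``if'' direction, suppose $\varphi : Q(K) \to Q(K')$ is a quandle isomorphism. The idea is to upgrade $\varphi$ to a group isomorphism respecting peripheral data. First form the associated (adjoint) group $\mathrm{As}(Q)$, generated by the quandle elements subject to the relations $a \triangleright b = b^{-1} a b$; one has $\mathrm{As}(Q(K)) \cong \pi_1(S^3 \setminus K)$, with quandle elements mapping exactly to the meridians, i.e.\ the conjugates of a fixed meridian $m$. Since $\varphi$ is an isomorphism of quandles, it induces an isomorphism $\Phi : \pi_1(S^3 \setminus K) \to \pi_1(S^3 \setminus K')$ sending meridians to meridians. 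Next I would recover the full peripheral system: handling the unknot separately (whose quandle is the one-element trivial quandle), for a nontrivial knot the peripheral subgroup $\mathbb{Z}^2 = \langle m, \ell \rangle$ is recoverable as the centralizer of a meridian, and the longitude $\ell$ is singled out inside it as the generator that is null-homologous in the complement, using that $H_1(S^3 \setminus K) \cong \mathbb{Z}$ is generated by $[m]$. Because $\Phi$ preserves meridians it preserves these centralizers and carries the null-homologous generator to the null-homologous generator, so $\Phi$ respects the meridian--longitude pair up to the allowed sign and conjugation ambiguities.

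Finally I would pass from algebra to topology. Knot complements are Haken (irreducible, sufficiently large) $3$-manifolds with incompressible toral boundary, so by Waldhausen's theorem the peripheral-structure-preserving isomorphism $\Phi$ is induced by a homeomorphism $h : S^3 \setminus N(K) \to S^3 \setminus N(K')$. Since $h$ sends meridian to meridian, it extends across the solid tori $N(K)$ and $N(K')$ by matching meridian disks, and Dehn filling along the preserved meridian returns $S^3$ on each side, yielding a homeomorphism $f : S^3 \to S^3$ with $f(K) = K'$. I expect the main obstacle to be precisely this algebraic-to-geometric passage: faithfully reconstructing the peripheral system from the bare quandle and verifying that $\Phi$ meets the hypotheses of Waldhausen's theorem. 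The delicate bookkeeping is the orientation data, namely the meridian-versus-longitude signs and the treatment of mirror images, which is exactly what controls whether one obtains equivalence up to orientation as the statement asserts.
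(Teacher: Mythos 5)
The paper does not prove this statement at all: it is quoted as background and attributed to Joyce \citep{Joyce82}, so there is no in-paper argument to compare against. Measured instead against Joyce's original proof, your sketch is essentially that proof, correctly reconstructed: the forward direction via the intrinsic noose model of $Q(K)$, and the converse via the adjoint group $\mathrm{As}(Q(K)) \cong \pi_1(S^3 \setminus K)$ with meridians as the images of quandle elements, recovery of the peripheral pair, and Waldhausen-type rigidity for Haken manifolds followed by Dehn filling along the preserved meridian. Your instinct about where the real content lies is also right, and two of the steps you pass over quickly are exactly the ones that need outside theorems rather than formal manipulation: (i) the identification of the peripheral subgroup as the centralizer of a meridian is a nontrivial $3$-manifold fact (in Joyce's treatment the quandle is the coset quandle of $(\pi_1, P, m)$, with the stabilizer of the base noose shown to be the peripheral subgroup using results on peripheral structure of knot groups), and it is what makes the bare quandle remember the pair $(\pi_1, P, m)$ at all; and (ii) the unknot case needs the converse direction too --- a knot whose quandle is trivial has knot group $\mathbb{Z}$, and one must invoke Dehn's lemma (Papakyriakopoulos) to conclude it is unknotted, since the solid-torus complement is not Haken-with-incompressible-boundary and Waldhausen's theorem does not apply there. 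Your closing remark on orientation bookkeeping is the correct reading of the statement as given here: since $f$ is not required to preserve the orientation of $S^3$, the isomorphism class of $Q(K)$ determines $K$ only up to simultaneous reversal of ambient and auxiliary orientation data (the meridian may go to a conjugate of an inverse meridian), which is precisely the equivalence Joyce establishes. In short: no gap in the strategy, but a complete write-up must cite the peripheral-centralizer theorem, Dehn's lemma for the trivial case, and Waldhausen's rigidity theorem explicitly, as none of these is derivable from the quandle axioms alone.
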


Since differentiating two links by just using the fundamental quandle is difficult, quandle invariants often employ homomorphism of the fundamental quandle into another finite quandle to formulate invariants. 
If we assign each arc of a link (L) to an element in a finite quandle $T$, and then apply the quandle relations of the fundamental quandle $Q(L)$ at each crossing to $T$, we obtain a system of equations. Solving this system of equations gives the homomorphism set $Hom(Q(L),T)$. These homomorphisms are referred to as the coloring of the link by elements of $T$. The cardinality of the set $|Hom(Q(L),T)|$ is a link invariant. 

\begin{definition}
The \emph{coloring} of a knot or link $L$ is defined as the assignment of values from a quandle $T$ to the arcs of $L$ such that the values respect the $\triangleright$ operation at each crossing. 
\end{definition}

\begin{definition}
For an oriented link $L$ with its fundamental quandle $Q(L)$ and a finite quandle $T$ (termed the coloring quandle), the set of quandle homomorphism from $Q(L)$ to $T$, denoted as $Hom(Q(L), T)$, is called the coloring space. The cardinality of the coloring space, $|Hom(Q(L),T)| = \phi_T(L)$ is called the \emph{quandle counting invariant}. 
\end{definition}

\noindent Quandle counting invariants sometimes may not be sufficient in distinguishing links as they omit information contained in the homomorphism set. An enhancement to this invariant is quandle 2-cocycle invariants $\phi_{\chi}(L,X)$ \citep{Carter05}, which count the homomorphism weighted by a cocycle, extracting additional information from the homomorphism set. Nelson \citep{Nels08a} proposed a dual-variable \emph{subquandle polynomial invariant}, termed $\phi_{qp}(L)$, which uses the image of each homomorphism. Nelson in \citep{Nels08b} defined a specialized version of the subquandle polynomial invariant named \emph{enhanced quandle counting invariant}. This counts the cardinality of the image of each homomorphism $f$, thus obtaining a set with multiplicities of integers, which is transformed into a polynomial as defined below.

\begin{definition}
The enhanced quandle counting invariant of a link $L$ with respect to a finite quandle $T$ is given by:
$$\Phi_E(L, T) = \sum_{f \in Hom(Q(L), T)} q^{|Im(f)|}$$
\end{definition}

Examples of frequently utilized coloring quandles include:
\begin{itemize}
\item[(a)] \emph{Takasaki kei:} $T = \mathbb{Z}_n$ with the quandle operation $(x \triangleright y) = 2 y - x \pmod{n}$.
\item[(b)] \emph{Alexander quandles:} $T$ is a module over the ring of Laurent polynomial $\mathbb{Z}[t, t^{-1}]$ with the quandle operation as $(x \triangleright y) = t x + (1-t) y$. When $t = -1$, this is equivalent to the Takasaki quandle if $t = -1$. 
\end{itemize}

We use symplectic quandles (described in the next section) for coloring. 

\section{Symplectic Quandle}
A symplectic quandle is a fixed quandle that employs bilinear forms over R-modules to define the quandle operation (see \citep{Nels08b} and \citep{Yett03}). 
\begin{definition}
    Let $M$ be a finite vector space defined over a commutative ring $R$ and let $\langle,\rangle : M \times M \mapsto R$ be an anti-symmetric bilinear form such that $\langle x, x \rangle = 0$ for all $x \in M$. Then M is a quandle with the operation
    $$x \triangleright y = x + \langle x, y \rangle y$$
    The corresponding right inverse operation is:
    $$x \triangleright^{-1} y = x - \langle x, y \rangle y$$
\end{definition}
It can easily be shown that the above structure satisfies all three axioms of quandles (see \citep{Yett03}). Navas and Nelson in \citep{Nels08b} established that if a symplectic quandle $M$ is defined over a finite field and has a non-degenerate form that is $\langle x, y \rangle = 0$ for all $y \in M$ implies $x = 0 \in M$, then $M$ includes a connected sub-quandle. 
\begin{definition}
A quandle, $M$, is considered connected if every element $a \in M$ can be obtained from every other element $b \in M$ through a sequence of $\triangleright$ and $\triangleright^{-1}$ operations. 
\end{definition}

\begin{theorem}[Navas and Nelson \citep{Nels08b}]
If a symplectic quandle $M$ is defined over a finite field and $\langle,\rangle$ is non-degenerate, then the subquandle $M \setminus \{0\}$ is a connected quandle.
\end{theorem}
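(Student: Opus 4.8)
The plan is to recast the statement as a transitivity assertion for a group action. For each fixed $y \in M$ the right-multiplication map $R_y(x) = x \triangleright y = x + \langle x, y\rangle y$ is a linear automorphism of $M$, with inverse $x \mapsto x \triangleright^{-1} y = x - \langle x, y\rangle y$, and because $\langle y, y\rangle = 0$ it is precisely a symplectic transvection in the direction $y$. The group $G = \langle R_y : y \in M\rangle$ generated by these maps is the inner automorphism group of the quandle, and two elements $a,b$ are joined by a sequence of $\triangleright,\triangleright^{-1}$ operations exactly when $b = g(a)$ for some $g \in G$. Thus connectivity of $M \setminus \{0\}$ is equivalent to transitivity of $G$ on $M \setminus \{0\}$. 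First I would record that $M \setminus \{0\}$ is genuinely a subquandle: if $x \neq 0$ and $x \triangleright y = 0$, then $x = -\langle x, y\rangle y$ is parallel to $y$, forcing $\langle x, y\rangle = 0$ and hence $x = 0$, a contradiction; equivalently, each $R_y$ is invertible and fixes $0$, so it permutes $M \setminus \{0\}$.

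Next I would exhibit enough elements of $G$. Non-degeneracy of the alternating form over the finite field $F$ forces $\dim M = 2n$ and guarantees a symplectic basis, so $M$ is a bona fide symplectic space. A direct computation using $\langle y, y\rangle = 0$ yields $R_y^k(x) = x + k\langle x, y\rangle y$ and $R_{cy}(x) = x + c^2 \langle x, y\rangle y$; writing $T_{y,\lambda}(x) = x + \lambda\langle x, y\rangle y$, this says $R_y^k = T_{y,k}$ and $R_{cy} = T_{y,c^2}$. Composing transvections in a common direction adds their coefficients, so the scalars $\lambda$ realizable in a fixed direction $y$ form the additive subgroup of $F$ generated by all squares. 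Over a finite field every element is a sum of two squares (by a counting argument on the $(q+1)/2$ squares in odd characteristic, and trivially in characteristic two where every element is a square), so that subgroup is all of $F$. Hence $G$ contains every transvection $T_{y,\lambda}$ for $y \in M$ and $\lambda \in F$.

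Finally I would invoke two classical facts about symplectic spaces over a field: that the symplectic transvections generate the full symplectic group $Sp(M)$, and that $Sp(M)$ acts transitively on $M \setminus \{0\}$ (for nonzero $a,b$ both lines are isotropic, so $a \mapsto b$ is an isometry of one-dimensional subspaces and extends to an element of $Sp(M)$ by Witt's extension theorem). Combining these with the previous paragraph gives $G \supseteq Sp(M)$ acting transitively on $M \setminus \{0\}$, which is exactly the desired connectivity.

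The routine parts are the transvection identities and the subquandle check. The hard part will be the scalar issue in the middle step: the quandle supplies only transvections with coefficient exactly $\langle x,y\rangle$ (coefficient ``$1$''), so one cannot directly write down a transvection with an arbitrary, in particular non-square, coefficient in odd characteristic. The resolution is that rescaling the direction $y \mapsto cy$ produces the square coefficient $c^2$ while composition adds coefficients, so the sum-of-two-squares fact for finite fields is precisely what bridges the gap and recovers all of $Sp(M)$. Alternatively, to avoid citing that transvections generate $Sp(M)$, the same computations give a hands-on transitivity proof: the identity $R_a R_b(a) = b$ whenever $\langle a,b\rangle = 1$ moves a vector to any hyperbolic partner, and a short case analysis (splitting on whether $\langle a,b\rangle = 0$ and, if so, routing through an auxiliary vector non-orthogonal to both $a$ and $b$) connects arbitrary nonzero $a$ and $b$.
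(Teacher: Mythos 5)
Your proposal is correct, but note that the paper itself offers no proof of this statement: it is quoted as a theorem of Navas and Nelson \citep{Nels08b}, so the only comparison available is with that cited source rather than with an argument in this paper. Your main route---identify the right-multiplications $R_y$ as symplectic transvections, show the inner automorphism group contains \emph{all} transvections $T_{y,\lambda}$, and then quote the classical facts that transvections generate $Sp(M)$ and that $Sp(M)$ is transitive on $M \setminus \{0\}$---is heavier machinery than needed, whereas the original Navas--Nelson argument is essentially the elementary transitivity proof you sketch at the end: a direct computation such as $R_a R_b(a) = b$ when $\langle a,b\rangle = 1$, together with a two-case analysis routing through an auxiliary vector $u$ non-orthogonal to both $a$ and $b$ when $\langle a,b\rangle = 0$ (such $u$ exists since $a^{\perp} \cup b^{\perp}$ is a union of two proper subspaces, hence not all of $M$). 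What your longer route buys is conceptual clarity (connectivity is exactly transitivity of a group of isometries, and one sees the group is the full symplectic group); what the elementary route buys is self-containedness and no appeal to Witt extension or generation theorems.

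You also correctly isolate and resolve the one genuine subtlety: the quandle only hands you coefficient-$1$ transvections, and rescaling the direction $y \mapsto cy$ produces the coefficient $c^{2}$, so in odd characteristic a nonsquare $\langle a,b\rangle$ cannot be handled by a single rescaled two-step move; your observation that composition in a fixed direction adds coefficients, combined with the fact that every element of a finite field is a sum of two squares, is exactly the bridge needed (and is where finiteness of the field enters). One small point worth making explicit in a final write-up: connectivity of the subquandle $M \setminus \{0\}$ requires the operating elements $y_i$ to lie in $M \setminus \{0\}$, but this is harmless since $R_0$ is the identity, so excluding $y = 0$ loses nothing.
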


The presence of a connected subquandle is noteworthy because a fundamental quandle of a link is invariably connected, and hence its homomorphism to a symplectic quandle yields non-trivial solutions with a number of non-trivial subquandles. This makes the symplectic quandle apt for the enhanced quandle counting invariant. \\

It is important to recognize that the non-degenerate anti-symmetric bilinear form requires an even dimension. For our computation in the next section, we use the 2-dimension symplectic quandles defined over finite field $\mathbb{Z}_p$ where $p \in \{2,3,5\}$. 

\section{Detecting Causality using Symplectic Quandle}

When a Cauchy surface is homeomorphic to $\mathbb{R}^2$, two events that are causally unrelated have a sky link that is isotopic to the pair of parallels on the solid torus  $S^1 \times \mathbb{R}^2$. When we study this link in $\mathbb{R}^3$, we get the connected sum of Hopf Links, denoted as $H$. A third knot component is necessary to identify the solid torus with $\mathbb{R}^3$ after we delete this third component. \\

Allen and Swenberg in \citep{AS20} constructed an infinite series of links (Allen-Swenberg links), which cannot be differentiated from $H$ using  the Alexander-Conway polynomial. They suggested that to ascertain an invariant's capability to detect causality, it must be able to distinguish these links from H. We denote the Allen-Swenberg links as $L_n$ where $n \in \mathbb{Z}_{+}$. The first link in the series, $L_1$, is depicted in figure \ref{fig:aslink}. Our objective is to assess if combining the Alexander Conway polynomial with a Symplectic quandle enables differentiation of Allen-Swenberg links\footnote{Allen and Swenberg \citep{AS20} identified the infinite family of links as $C(1*T_0(n))$; $C(0)$ represents connected sum of two Hopf links $H$.} from $H$. 

\begin{figure}[h]
    \centering
    \includegraphics[width=7cm]{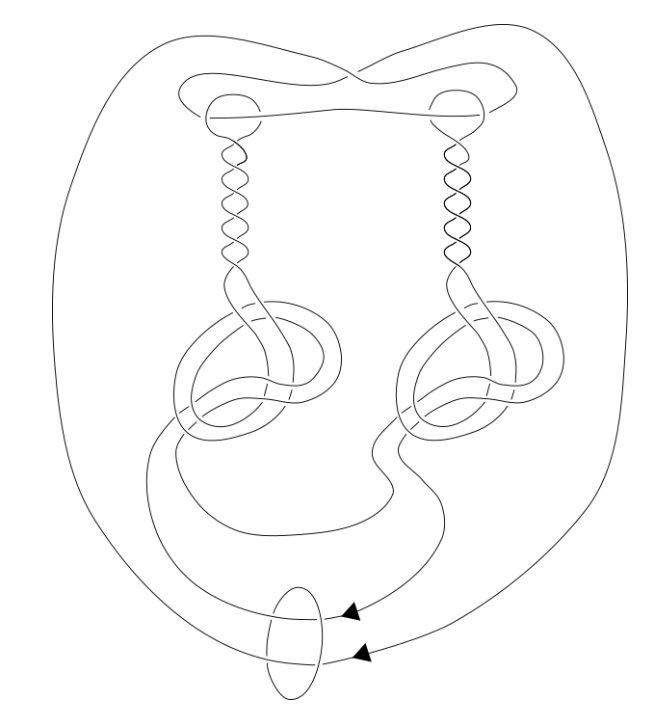}
    \caption{First link, $L_1$, of Allen-Swenberg links}
    \label{fig:aslink}
\end{figure}
\FloatBarrier
Let us define a symplectic quandle $T = (\mathbb{Z}_5)^2$ over the field $\mathbb{Z}_5$ with its bilinear form characterized by a $2 \times 2$ anti-symmetric invertible matrix $A$.
 $$A \equiv \begin{pmatrix}. 
    0 & 1\\
    -1 & 0 \\
\end{pmatrix} $$
For any pair of vectors $x, y \in T$ , $x \triangleright y  \equiv x + (xAy^{T}) \cdot y \pmod{5}$. We denote the fundamental quandle of any link $L$ as $Q(L)$ and the set of homomorphism from $Q(L)$ to $T$ as $Hom(Q(L),T)$. 

\subsection{Homomorphism of the fundamental quandle of H to T}
Figure \ref{fig:labhopf} shows the labeled connected sum of Hopf links ($H$).   
\begin{figure}[ht]
    \centering
    \includegraphics[width = 10cm]{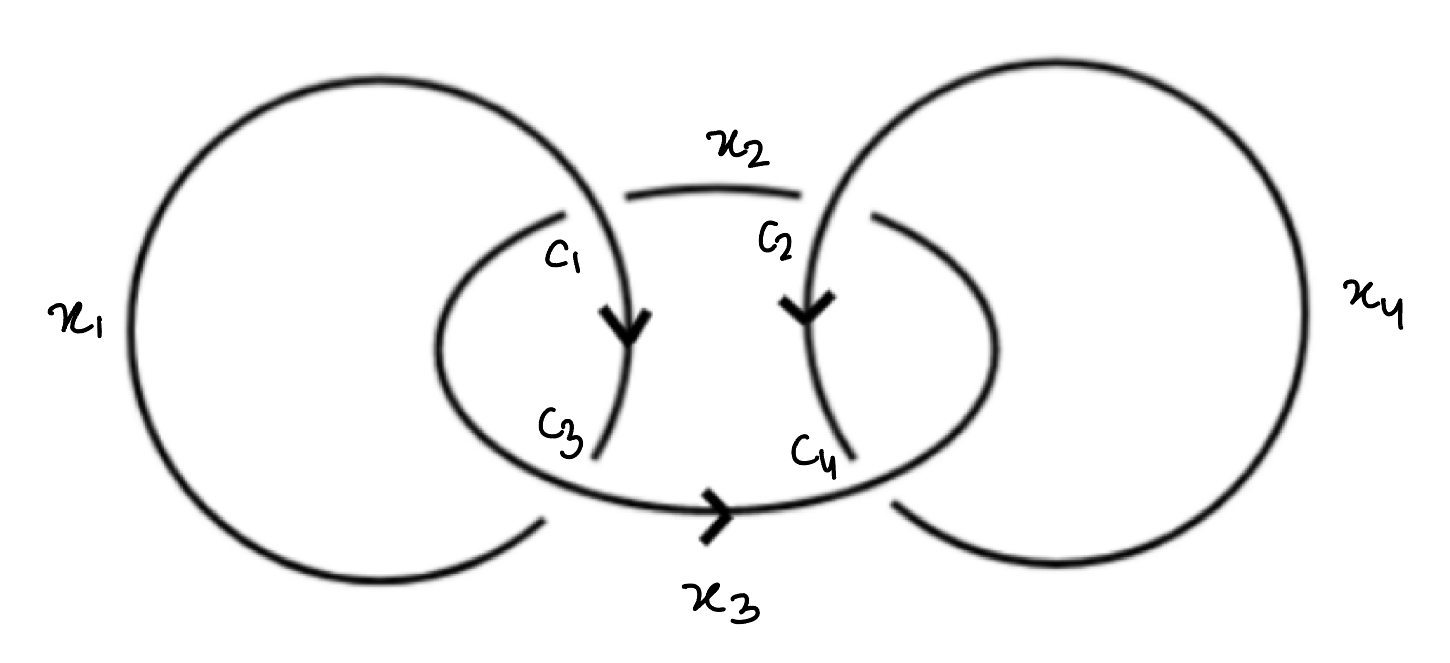}
    \caption{Labelled Connected Sum of Hopf Links (H)}
    \label{fig:labhopf}
\end{figure}
\FloatBarrier
Each crossing in figure \ref{fig:labhopf} represents the quandle operation under the fundamental quandle. We map each arc to a vector $x_i \in T$ and apply The same quandle operation to $T$ to find homomorphism of $Q(H)$ into $T$. The subsequent table presents the fundamental quandle and the associated equation derived for the symplectic quandle $T$ at each crossing.
\begin{center}
\resizebox{14cm}{!}{
    \begin{tabular}{|c|c|c|}
    \hline
    Crossing &  Fundamental Quandle $Q(H)$ & Symplectic Quandle $T$\\
    \hline 
    $c_1$ & $x_2 = x_3 \triangleright x_1$& $x_2 = x_3 + (x_3Ax_1^{T}) x_1$ \\
    \hline
    $c_2$ & $x_3 = x_2 \triangleright x_4$& $x_3 = x_2 + (x_2Ax_4^{T}) x_4$ \\
    \hline 
    $c_3$ & $x_1 = x_1 \triangleright x_3$ &  $x_1 = x_1 + (x_1Ax_3^{T}) x_3$\\
    \hline
    $c_4$ & $x_4 = x_4 \triangleright x_3$ & $x_4 = x_4 + (x_4Ax_1^{T}) x_3$\\
    \hline
\end{tabular}
}
\end{center}
\FloatBarrier
The solutions to the system of equations shown under $T$ give us the homomorphism set $Hom(Q(H), T)$. 
Given that each arc variable is a vector in the vector space $(Z_5)^2$, this system has $4 \times 2 = 8$ equations with $8$ unknown. Because the symplectic quandle leads to non-linear equations, we cannot solve the system by using Gaussian elimination or row reduction. We use Wolfram Mathematica \citep{Wol23} to solve this system. We obtain a total of $1225$ solutions, and the relationship $x_2 = x_3$ holds in all solutions. Each solution signifies a coloring scheme of arcs, with each vector indicating a color. If we cluster the arcs by their colors, the homomorphism set displays the following groupings (arcs inside the braces share the same color):\\

\noindent Monochromatic coloring: \{1,2,3,4\}\\
Dual coloring has 3 configurations: \{\{1, 2, 3\}, \{4\}\}; \{\{1, 4\}, \{2, 3\}\}; \{\{1\}, \{2, 3, 4\}\}\\
Triple coloring has 1 configuration: \{\{1\}, \{2, 3\}, \{4\}\}\\

\noindent Note: Each configuration has multiple solutions to represent different possible color combinations. For instance, 25 solutions exist for monochromatic coloring, one for each color. \\

$1225$ solutions consist of 25 solutions that use a single color i.e. $|Im(f)| = 1$, 360 solutions with two distinct colors, i.e., $|Im(f)| = 2$, and 840 solutions with three distinct colors, i.e., $|Im(f)| = 3$. There are no solutions that use four distinct colors because the arcs $x_2$ and $x_3$ always have the same color. Consequently, the cardinality of the homomorphism set, and the corresponding enhanced quandle counting polynomial are given by: 
$$|Hom(Q(H),T)| = 1225 \text{ and } \phi_E(H,T) = 25 q + 360 q^2 + 840 q^3$$. 
Notice that \emph{for every $f:(x_1,x_2,x_3,x_4) \in Hom(Q(H),T)$, $x_2 = x_3$ which implies that maximum number of colors in any $f$ is $3$ and hence the degree of polynomial $\phi_E(H,T)$ is $3$}.

\subsection{Homomorphism of the fundamental quandle of $L_1$ to $T$}
Now, we follow the identical procedure for the first Allen-Swenberg link (denoted as $L_1$), which has been labeled as shown in figure \ref{fig:labelaslink}. The labeling mirrors that shown in Figure 7 of Leventhal \citep{Lev23} for easier comparisons. 
\begin{figure}[h]
    \centering
    \includegraphics[width=14cm]{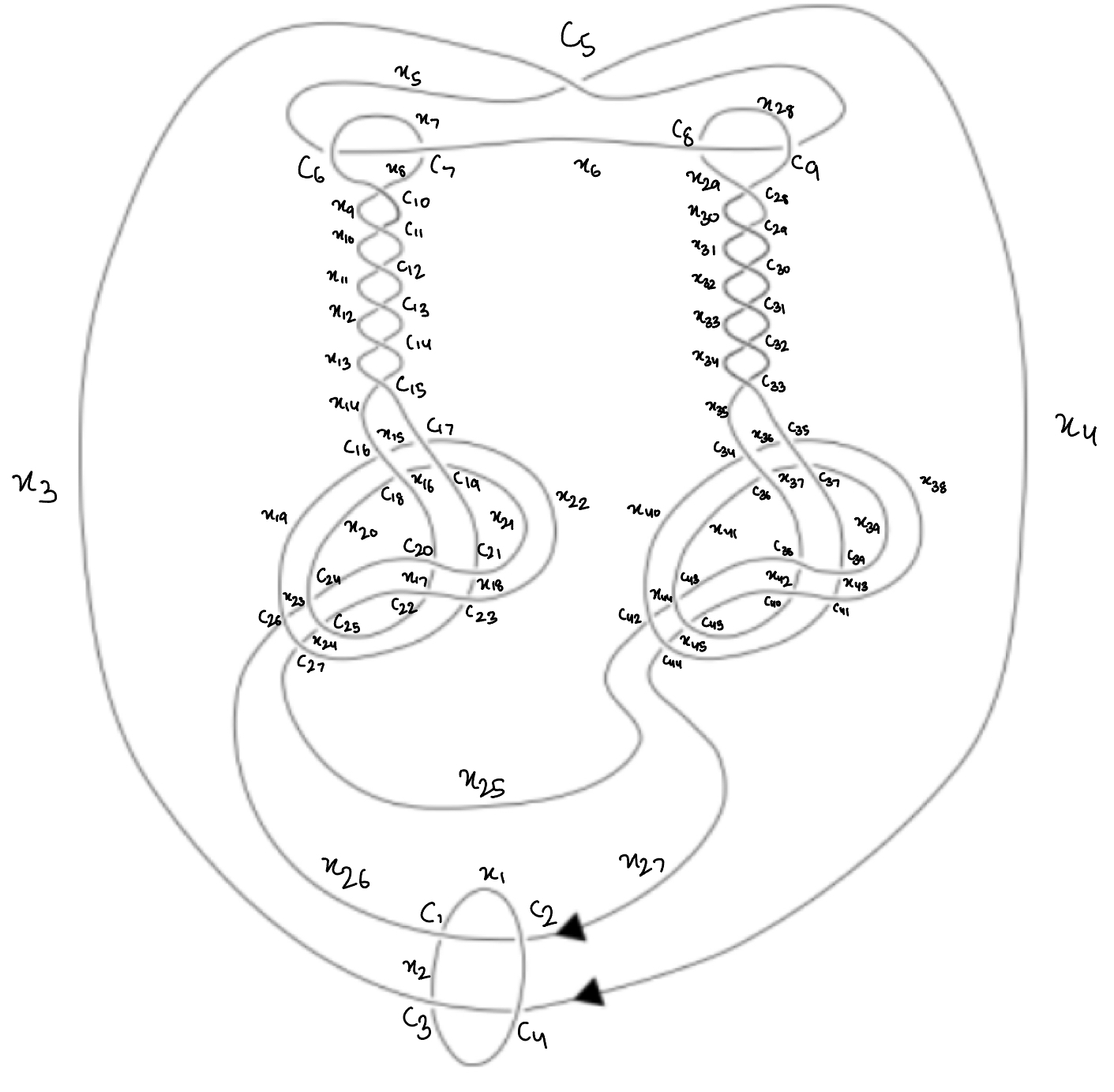}
    \caption{Labelled Allen-Swenberg Link $(L_1)$}
    \label{fig:labelaslink}
\end{figure}
\FloatBarrier
The subsequent table outlines the quandle relations under fundamental quandle $Q(L_1)$. For the sake of brevity, we've omitted the associated symplectic quandle operation for each crossing. However, it is given by $x \triangleright y \equiv x + (x A y^{T})y$ where $x, y$ are corresponding arcs under the fundamental quandle.
\begin{center}
\resizebox{14cm}{!}{
 \begin{tabular}{|c|c|}
    \hline
    Crossing & $Q(L_1)$ \\
    \hline
    $c_1$ & $x_2 = x_1 \triangleright x_{26}$ \\
    \hline
    $c_2$ & $x_{26} = x_{27} \triangleright x_1$ \\
    \hline
    $c_3$ & $x_1 = x_2 \triangleright x_3$ \\
    \hline
    $c_4$ & $x_3 = x_4 \triangleright x_1$ \\
    \hline
    $c_5$ & $x_4 = x_5 \triangleright x_3$ \\
    \hline
    $c_6$ & $x_5 = x_6 \triangleright x_7$ \\
    \hline
    $c_7$ & $x_8 = x_7 \triangleright x_6$ \\
    \hline
    $c_8$ & $x_{29} = x_{28} \triangleright x_6$ \\
    \hline
    $c_9$ & $x_3 = x_6 \triangleright x_{28}$ \\
    \hline
    $c_{10}$ & $x_9 = x_8 \triangleright x_7$ \\
    \hline
    $c_{11}$& $x_7 = x_{10} \triangleright x_9$ \\
    \hline
    $c_{12}$& $x_{11} = x_9 \triangleright x_{10}$ \\
    \hline
    $c_{13}$& $x_{10} = x_{11} \triangleright x_{12}$ \\
    \hline
    $c_{14}$& $x_{13} = x_2 \triangleright x_4$ \\
    \hline
    $c_{15}$& $x_{12} = x_{14} \triangleright x_{13}$ \\
    \hline
    $c_{16}$& $x_{19} = x_{15} \triangleright x_{14}$ \\
    \hline
    $c_{17}$& $x_{22} = x_{15} \triangleright x_{13}$ \\
    \hline
    $c_{18}$& $x_{20} = x_{16} \triangleright x_{14}$ \\
    \hline
    $c_{19}$& $x_{21} = x_{16} \triangleright x_{13}$ \\
    \hline
    $c_{20}$& $x_{14} = x_{17} \triangleright x_{21}$ \\
    \hline
    $c_{21}$& $x_{13} = x_{18} \triangleright x_{21}$ \\
    \hline
    $c_{22}$& $x_{20} = x_{17} \triangleright x_{22}$ \\
    \hline
   $c_{23}$& $x_{19} = x_{18} \triangleright x_{22}$ \\
    \hline
\end{tabular}

 \begin{tabular}{|c|c|}
    \hline
    Crossing &  Quandle Relation\\
    \hline
    $c_{24}$& $x_{21} = x_{23} \triangleright x_{20}$ \\
    \hline
    $c_{25}$& $x_{22} = x_{24} \triangleright x_{20}$ \\
    \hline
    $c_{26}$& $x_{26} = x_{23} \triangleright x_{19}$ \\
    \hline
    $c_{27}$& $x_{25} = x_{24} \triangleright x_{19}$ \\
    \hline
    $c_{28}$& $x_{30} = x_{28} \triangleright x_{29}$ \\
    \hline
    $c_{29}$& $x_{29} = x_{31} \triangleright x_{30}$ \\
    \hline
    $c_{30}$& $x_{32} = x_{30} \triangleright x_{31}$ \\
    \hline
    $c_{31}$& $x_{31} = x_{33} \triangleright x_{32}$ \\
    \hline
    $c_{32}$& $x_{34} = x_{32} \triangleright x_{33}$ \\
    \hline
    $c_{33}$& $x_{33} = x_{35} \triangleright x_{34}$ \\
    \hline
    $c_{34}$& $x_{40} = x_{36} \triangleright x_{35}$ \\
    \hline
    $c_{35}$& $x_{38} = x_{36} \triangleright x_{34}$ \\
    \hline
    $c_{36}$& $x_{41} = x_{37} \triangleright x_{35}$ \\
    \hline
    $c_{37}$& $x_{39} = x_{37} \triangleright x_{34}$ \\
    \hline
    $c_{38}$& $x_{35} = x_{42} \triangleright x_{39}$ \\
    \hline
    $c_{39}$& $x_{34} = x_{43} \triangleright x_{39}$ \\
    \hline
    $c_{40}$& $x_{41} = x_{42} \triangleright x_{38}$ \\
    \hline
    $c_{41}$& $x_{40} = x_{43} \triangleright x_{38}$ \\
    \hline
    $c_{42}$& $x_{25} = x_{44} \triangleright x_{40}$ \\
    \hline
    $c_{43}$& $x_{39} = x_{44} \triangleright x_{41}$ \\
    \hline
    $c_{44}$& $x_{27} = x_{45} \triangleright x_{40}$ \\
    \hline
    $c_{45}$& $x_{38} = x_{45} \triangleright x_{41}$ \\
    \hline
\end{tabular}
}
\end{center}
The system of equations presented above comprises $45 \times 2 = 90$ equations with $90$ unknowns, given that $T = (\mathbb{Z}_5)^2$. The size of the solution space of such a system would be $5^{90}$, which makes it impossible for any brute-force algorithm to compute within a feasible timeframe. Moreover, these equations are non-linear, rendering linear solution methods like Gaussian elimination inapplicable. Indeed, Wolfram Mathematica \citep{Wol23} was unable to solve this system. \\ 

Nonetheless, we could use a divide-and-conquer type algorithm to solve this system efficiently. Such an algorithm (e.g. merge sort) recursively breaks down a problem into smaller sub-problems, until these sub-problems are small enough to be solved directly. The solutions to the sub-problems are then combined to give a solution to the original problem.\\

In our approach, we divide the link into multiple parts, each encompassing 2-3 crossings. We solve the equations of these smaller parts, and then we combine the solution sets of these parts using the join operation in Mathematica. The join operation combines the solution sets of two adjacent pieces in a manner that the values of common variables are equal. It is important to note that atleast one common variable exists when parts are adjacent. The join operation is significantly less computationally intensive for three reasons: a) it operates on the reduced solution set rather than the entirety of potential solutions, b) it matches based on a subset of shared variables, and c) it is a less complex operation as it requires combining two matching sets instead of evaluating full system of equations.\\

A part with $3$ crossings will encompass at most $5$ arc variables, hence the solution space of such part is $5^{10}$ which is significantly smaller than $5^{90}$. Since a join operation is less computational intensive, we can efficiently combine the solutions of smaller parts into the final solution set. Indeed, using Mathematica \citep{Wol23} the solution set was promptly generated in few minutes. More importantly, this algorithm can be seamlessly up-scaled to $L_2$ with 85 crossings, as the computation time increases proportionately with the number of crossings and not exponentially. \\

Upon solving the system of equations for $L_1$, we obtains $1225$ solutions. These include 25 solutions where all arcs have a single color ($|Im(f)| = 1$), 360 solutions with two distinct colors ($|Im(f)| = 2$), 360 solutions with three distinct colors($|Im(f)| = 3$), 360 solutions with 21 distinct colors($|Im(f)| = 21$), and 120 solutions with 22 distinct colors ($|Im(f)| = 22$). \\

We can get more specific information about the homomorphism set, if we group the arcs having identical values (or colors) together as shown in the table below. Note that a specific coloring scheme can have multiple color combinations within the homomorphism set. For instance, a monochromatic coloring scheme can accommodate 25 colors (in $\mathbb{Z}_5$).
\begin{center}
\small{Coloring scheme with 2 colors (3 configurations):}\\
\small{\{\{1, 2\}, \{3,4,5,6,7-45\}\},\\
\{\{1, 2, 7-45\}, \{3, 4, 5, 6\}\},\\ 
\{\{1, 2, 3, 4, 5, 6\}, \{7-45\}\}} \\
Coloring scheme using 3 colors (1 configuration):\\
\small{\{\{1, 2\}, \{3, 4, 5, 6\}, \{7-45\}\} }\\
Coloring scheme using 21 colors (3 configurations):\\
\small{\{\{1, 2\}, \{3, 4, 5, 34, 42\}, \{6, 11, 19\}, \{7, 24, 26, 27, 28\}, \{8, 25, 29\}, \{9, 22\}, \{10, 15, 21\}, \{12, 20\}, \{13\}, \{14\}, \{16, 43\}, \{17, 33, 41\}, \{18, 37\}, \{23\}, \{30, 38\}, \{31, 39\}, \{32,40\}, \{35\}, \{36\}, \{44\}, \{45\}\},\\
\{\{1, 2\}, \{3, 4, 5, 12, 20\},\{6,35\},\{7, 24, 26, 27, 28\}, \{8, 25, 29\}, \{9, 22\}, \{10, 15, 21\}, \{11, 19\}, \{13\}, \{14\}, \{16, 43\}, \{17, 33, 41\}, \{18, 37\}, \{23\}, \{30, 38\}, \{31, 39\}, \{32, 40\}, \{34, 42\}, \{36\}, \{44\}, \{45\}\}, \\
\{\{1, 2\}, \{3, 4, 5, 15\}, \{6, 9, 22\}, \{7, 26, 27, 28\}, \{8, 25, 29, 45\}, \{10, 21\}, \{11, 19\}, \{12, 20, 42\}, \{13, 17\}, \{14\}, \{16, 43\}, \{18, 37\}, \{23\}, \{24\}, \{30, 38\}, \{31, 36, 39\}, \{32, 40\}, \{33,41\}, \{34\}, \{35\}, \{44\}\}
   }\\
   Coloring scheme using 22 colors (1 configuration):\\
\small{\{\{1, 2\}, \{3, 4, 5, 10, 21\}, \{6\}, \{7, 26, 27, 28\}, \{8, 25, 29, 45\}, \{9, 22\}, \{11, 19\}, \{12, 20, 42\}, \{13, 17\}, \{14\}, \{15\}, \{16,43\}, \{18, 37\}, \{23\}, \{24\}, \{30, 38\}, \{31, 36, 39\}, \{32, 40\}, \{33, 41\}, \{34\}, \{35\}, \{44\}\}
}\\
\end{center}
Observe that the coloring space is characterized as: \emph{for every $f:(x_1,\cdots, x_{45})  \in Hom(Q(L_1),T)$, $x_1 = x_2$, $x_3 = x_4 = x_5$, and $x_{26} = x_{27}$}. \\

Based on the above result, the number of homomorphisms from $Q(L_1)$ to $T$, and the enhanced quandle counting polynomial, are given by:
$$|Hom(Q(L_1),T)| = 1225 \text{ and } \phi_E(L_1,T) = 25 q + 360 q^2 + 360 q^3 + 360 q^{21} + 120 q^{22}$$
It is evident that the invariant measuring the cardinality of the homomorphism set does not differentiate between the two links $H$ and $L_1$ because $|Hom(Q(L_1),T)| = |Hom(Q(H),T)| = 1225$. However, an invariant that uses enhanced quandle counting polynomials can distinguish these two links because $Hom(Q(L_1),T)$ set includes coloring schemes with $21$ and $22$ distinct colors, whereas $Hom(Q(H),T)$ lacks such coloring schemes. Therefore, we can conclude that the symplectic quandle is able to distinguish link $L_1$ from $H$. We corroborated our findings in $\mathbb{Z}_2$ and $\mathbb{Z}_3$, as illustrated below, and observed similar results. 
\begin{center}
\resizebox{18cm}{!}{
    \begin{tabular}{|c|c|c|c|}
    \hline
    Vector Field &  $\phi_E(H,T)$ & $\phi_E(L_1,T)$ & Distinguishes Links?\\
    \hline 
    $Z_2$ & $4 q + 18 q^2 + 6 q^3$ & $4 q + 18 q^2 + 6 q^4$ & Yes \\
    \hline
    $Z_3$ & $9 q + 72 q^2 + 72 q^3$ & $9 q + 72 q^2 + 24 q^3 + 48 q^9$ & Yes \\
    \hline 
    $Z_5$ & $25 q + 360 q^2 + 840 q^3 $ &  $25 q + 360 q^2 + 360 q^3 + 360 q^{21} + 120 q^{22}$ & Yes \\
    \hline
\end{tabular}
}
\end{center}
\FloatBarrier

Since the symplectic quandles have a maximum of 4 colors in $(\mathbb{Z}_2)^2$ and 9 colors in $(\mathbb{Z}_9)^2$, the enhanced counting polynomial can have a maximum degree of 4 and 9, respectively. Therefore, the coloring schemes with $21$ and $22$ distinct colors do not exist in these cases. However, the two links remain distinguishable since the set $Hom(Q(H),T)$ does not have a coloring scheme with 4 or 9 distinct colors. \\

We confirmed that our result is invariant to any non-null anti-symmetric matrix in $(Z_5)^2$. If we use a null matrix, $T$ becomes a trivial quandle and we get the same result as described in \citep{Lev23} (Alexander Quandle with $t =1$) that the quandle cannot distinguish $H$ from $L_1$. 

\subsection{Homomorphism of the fundamental quandle of $L_2$ to $T$}
Now, let's investigate if the symplectic quandle $T$ can differentiate the second link (denoted as $L_2$) in the Allen-Swenberg series from both the connected sum of two Hopf links (H) and the first Allen-Swenberg link ($L_1$). Figure \ref{fig:as2} shows the labeled $L_2$.
\begin{figure}[ht]
    \centering
    \includegraphics[width = 14.5cm]{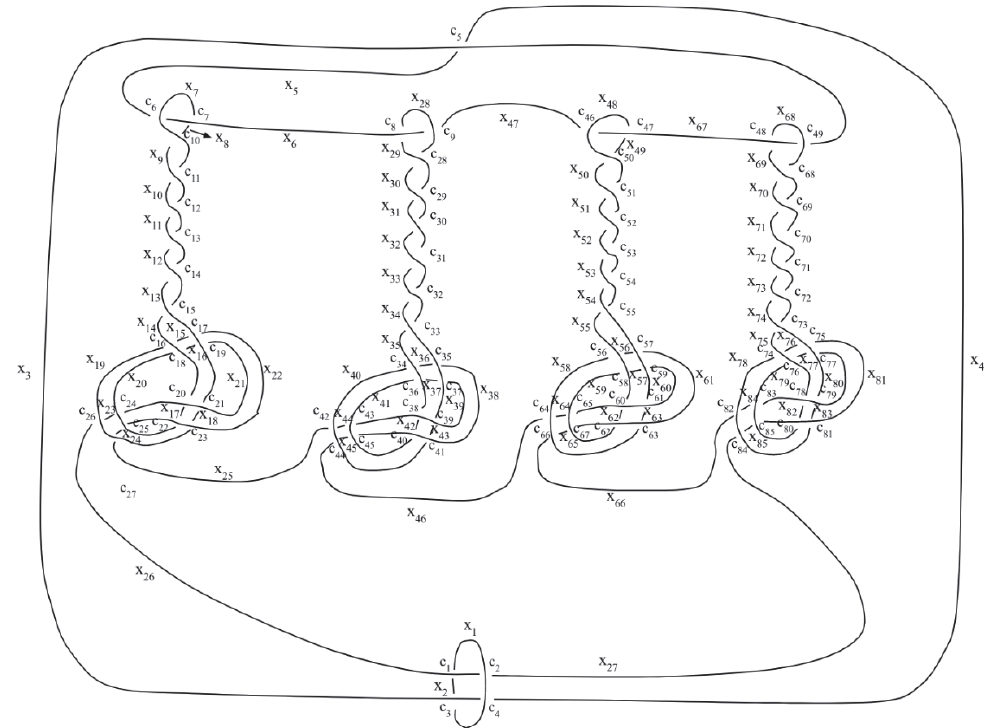}
    \caption{Labeled Allen Swenberg Second Link ($L_2$)}
    \label{fig:as2}
\end{figure}
\FloatBarrier

The table below displays the relationship among the elements of the fundamental quandle $Q(L_2)$ at each crossing. These relationships for the crossing from $c_1$ to $c_{45}$ are identical to those in $Q(L_1)$, with the exception of $c_9$ and $c_{44}$ (which are highlighted).

\resizebox{17cm}{!}{
 \begin{tabular}{|c|c|}
    \hline
    Crossing & $Q(L_2)$ \\
    \hline
    $c_1$ & $x_2 = x_1 \triangleright x_{26}$ \\
    \hline
    $c_2$ & $x_{26} = x_{27} \triangleright x_1$ \\
    \hline
    $c_3$ & $x_1 = x_2 \triangleright x_3$ \\
    \hline
    $c_4$ & $x_3 = x_4 \triangleright x_1$ \\
    \hline
    $c_5$ & $x_4 = x_5 \triangleright x_3$ \\
    \hline
    $c_6$ & $x_5 = x_6 \triangleright x_7$ \\
    \hline
    $c_7$ & $x_8 = x_7 \triangleright x_6$ \\
    \hline
    $c_8$ & $x_{29} = x_{28} \triangleright x_6$ \\
    \hline
    \textcolor{blue}{$c_9$} & \textcolor{blue}{$x_{47} = x_6 \triangleright x_{28}$} \\
    \hline
    $c_{10}$ & $x_9 = x_8 \triangleright x_7$ \\
    \hline
    $c_{11}$& $x_7 = x_{10} \triangleright x_9$ \\
    \hline
    $c_{12}$& $x_{11} = x_9 \triangleright x_{10}$ \\
    \hline
    $c_{13}$& $x_{10} = x_{11} \triangleright x_{12}$ \\
    \hline
    $c_{14}$& $x_{13} = x_2 \triangleright x_4$ \\
    \hline
    $c_{15}$& $x_{12} = x_{14} \triangleright x_{13}$ \\
    \hline
    $c_{16}$& $x_{19} = x_{15} \triangleright x_{14}$ \\
    \hline
    $c_{17}$& $x_{22} = x_{15} \triangleright x_{13}$ \\
    \hline
    $c_{18}$& $x_{20} = x_{16} \triangleright x_{14}$ \\
    \hline
    $c_{19}$& $x_{21} = x_{16} \triangleright x_{13}$ \\
    \hline
    $c_{20}$& $x_{14} = x_{17} \triangleright x_{21}$ \\
    \hline
    $c_{21}$& $x_{13} = x_{18} \triangleright x_{21}$ \\
    \hline
    $c_{22}$& $x_{20} = x_{17} \triangleright x_{22}$ \\
    \hline
   $c_{23}$& $x_{19} = x_{18} \triangleright x_{22}$ \\
   \hline
     $c_{24}$& $x_{21} = x_{23} \triangleright x_{20}$ \\
    \hline
    $c_{25}$& $x_{22} = x_{24} \triangleright x_{20}$ \\
    \hline
    $c_{26}$& $x_{26} = x_{23} \triangleright x_{19}$ \\
    $c_{27}$& $x_{25} = x_{24} \triangleright x_{19}$ \\
    \hline
    $c_{28}$& $x_{30} = x_{28} \triangleright x_{29}$ \\
    \hline
\end{tabular}

 \begin{tabular}{|c|c|}
    \hline
    Crossing & $Q(L_2)$\\
    \hline
    $c_{29}$& $x_{29} = x_{31} \triangleright x_{30}$ \\
    \hline
    $c_{30}$& $x_{32} = x_{30} \triangleright x_{31}$ \\
    \hline
    $c_{31}$& $x_{31} = x_{33} \triangleright x_{32}$ \\
    \hline
    $c_{32}$& $x_{34} = x_{32} \triangleright x_{33}$ \\
    \hline
    $c_{33}$& $x_{33} = x_{35} \triangleright x_{34}$ \\  
    \hline  
    $c_{34}$& $x_{40} = x_{36} \triangleright x_{35}$ \\
    \hline
    $c_{35}$& $x_{38} = x_{36} \triangleright x_{34}$ \\
    \hline
    $c_{36}$& $x_{41} = x_{37} \triangleright x_{35}$ \\
    \hline
    $c_{37}$& $x_{39} = x_{37} \triangleright x_{34}$ \\
    \hline
    $c_{38}$& $x_{35} = x_{42} \triangleright x_{39}$ \\
    \hline
    $c_{39}$& $x_{34} = x_{43} \triangleright x_{39}$ \\
    \hline
    $c_{40}$& $x_{41} = x_{42} \triangleright x_{38}$ \\
    \hline
    $c_{41}$& $x_{40} = x_{43} \triangleright x_{38}$ \\
    \hline
    $c_{42}$& $x_{25} = x_{44} \triangleright x_{40}$ \\
    \hline
    $c_{43}$& $x_{39} = x_{44} \triangleright x_{41}$ \\
    \hline
    \textcolor{blue}{$c_{44}$}& \textcolor{blue}{$x_{46} = x_{45} \triangleright x_{40}$} \\
    \hline
    $c_{45}$& $x_{38} = x_{45} \triangleright x_{41}$ \\
  	\hline
    $c_{46}$& $x_{47} = x_{67} \triangleright x_{48}$ \\
    \hline
    $c_{47}$& $x_{49} = x_{48} \triangleright x_{67}$ \\
    \hline
    $c_{48}$& $x_{69} = x_{68} \triangleright x_{67}$ \\
    \hline
    $c_{49}$& $x_{3} = x_{67} \triangleright x_{68}$ \\
    \hline
    $c_{50}$& $x_{50} = x_{49} \triangleright x_{48}$ \\
    \hline
    $c_{51}$& $x_{48} = x_{51} \triangleright x_{50}$ \\
    \hline
    $c_{52}$& $x_{52} = x_{50} \triangleright x_{51}$ \\
    \hline
    $c_{53}$& $x_{51} = x_{53} \triangleright x_{52}$ \\
    \hline
    $c_{54}$& $x_{54} = x_{52} \triangleright x_{53}$ \\
    \hline
    $c_{55}$& $x_{53} = x_{55} \triangleright x_{54}$ \\
    \hline
    $c_{56}$& $x_{58} = x_{56} \triangleright x_{55}$ \\  
    \hline
\end{tabular}

 \begin{tabular}{|c|c|}
    \hline
    Crossing & $Q(L_2)$ \\
	\hline 
    $c_{57}$& $x_{61} = x_{56} \triangleright x_{54}$ \\
    \hline
    $c_{58}$& $x_{59} = x_{57} \triangleright x_{55}$ \\
    \hline
    $c_{59}$& $x_{60} = x_{57} \triangleright x_{54}$ \\
    \hline
    $c_{60}$& $x_{55} = x_{62} \triangleright x_{60}$ \\
    \hline
    $c_{61}$& $x_{54} = x_{63} \triangleright x_{60}$ \\
    \hline
    $c_{62}$& $x_{59} = x_{62} \triangleright x_{61}$ \\
    \hline
    $c_{63}$& $x_{58} = x_{63} \triangleright x_{61}$ \\
    \hline
    $c_{64}$& $x_{46} = x_{64} \triangleright x_{58}$ \\
    \hline
    $c_{65}$& $x_{60} = x_{64} \triangleright x_{59}$ \\
    \hline
    $c_{66}$& $x_{66} = x_{65} \triangleright x_{58}$ \\
    \hline
    $c_{67}$& $x_{61} = x_{65} \triangleright x_{59}$ \\
    \hline
    $c_{68}$& $x_{70} = x_{68} \triangleright x_{69}$ \\
    \hline
   $c_{69}$& $x_{69} = x_{71} \triangleright x_{70}$ \\
       \hline
    $c_{70}$& $x_{72} = x_{70} \triangleright x_{71}$ \\
    \hline
    $c_{71}$& $x_{71} = x_{73} \triangleright x_{72}$ \\
    \hline
    $c_{72}$& $x_{74} = x_{72} \triangleright x_{73}$ \\
    \hline
    $c_{73}$& $x_{73} = x_{75} \triangleright x_{74}$ \\
    \hline
    $c_{74}$& $x_{78} = x_{76} \triangleright x_{75}$ \\
    \hline
    $c_{75}$& $x_{81} = x_{76} \triangleright x_{74}$ \\
    \hline
    $c_{76}$& $x_{79} = x_{77} \triangleright x_{75}$ \\
    \hline
    $c_{77}$& $x_{80} = x_{77} \triangleright x_{74}$ \\
    \hline
    $c_{78}$& $x_{75} = x_{82} \triangleright x_{80}$ \\
    \hline
   $c_{79}$& $x_{74} = x_{83} \triangleright x_{80}$ \\
    \hline
    $c_{80}$& $x_{79} = x_{82} \triangleright x_{81}$ \\
    \hline
    $c_{81}$& $x_{78} = x_{83} \triangleright x_{81}$ \\
    \hline
    $c_{82}$& $x_{66} = x_{84} \triangleright x_{78}$ \\
    \hline
    $c_{83}$& $x_{80} = x_{84} \triangleright x_{79}$ \\
    \hline
    $c_{84}$& $x_{27} = x_{85} \triangleright x_{78}$ \\
    \hline
    $c_{85}$& $x_{81} = x_{85} \triangleright x_{79}$ \\
    \hline
\end{tabular}
}
\vskip 1em

By solving the above system of equations using our divide and conquer algorithm, we obtain $13125$ coloring solutions. These consist of 25 solutions having single color for all arcs ($|Im(f)| = 1$), 360 solutions with two distinct colors ($|Im(f)| = 2$), 360 solutions with three distinct colors ($|Im(f)| = 3$), 360 solutions with 21 distinct colors ($|Im(f)| = 21$), 120 solution with 22 distinct colors ($|Im(f)| = 22$), and 1200 solutions with 25 distinct colors ($|Im(f)| = 22$). Consequently, the cardinality of $Hom (Q(L_2),T)$ and corresponding enhanced quandle counting polynomial is given by:
$$|Hom (Q(L_2),T)| = 13125 \text{ and } \phi_E(L2,T) = 25 q + 360 q^2 + 360 q^3 + 360 q^{21} + 120 q^{22} + 1200 q^{25}$$
With this information, we can differentiate $L_2$ from $H$ and also from $L_1$ using both the cardinality of the homomorphism set $Hom (Q(L_2),T)$ and the enhanced counting polynomial associated with this set. Notice that all additional solutions of $L_2$ use $25$ distinct colors, i.e., $|Im(f)| = 25$. $Hom(Q(L_1),T)$ and $Hom(Q(H),T)$ do not exhibit colorings with 25 distinct colors. \\

To derive further insight from the $Hom (Q(L_2),T)$ set, we group arcs with the same color. We find that $Hom (Q(L_2),T)$ has $100$ different coloring schemes that use $25$ colors as shown below. Each of these coloring schemes can have 120 feasible color combinations, making $12000$ additional homomorphisms.
\begin{center}
Coloring scheme with 25 distinct colors (100 configurations):\\

\small{\{\{1, 2\}, \{3, 4, 5, 53, 59\}, \{6, 11, 19, 37\}, \{7, 24, 26, 27, 70\}, \{8, 25, 31\}, \{9, 22, 29, 45\}, \{10, 15, 21, 68\}, \{12, 20, 47\}, \{13, 62, 75\}, \{14, 34\}, \{16, 64\}, \{17, 35, 54, 77, 79\}, \{18, 36, 39, 44, 76\}, \{23, 57\}, \{28, 51, 56, 60\}, \{30, 46, 48, 65\}, \{32\}, \{33,80\}, \{38, 49, 66, 71, 81\}, \{40, 42, 73, 78\}, \{41, 63, 84\}, \{43, 72, 83\}, \textcolor{blue}{\{50, 61, 69, 85\}, \{52, 58, 67\}, \{55, 74, 82\}\}}},\\

. . . . .98 more coloring scheme...\\

\small{\{\{1, 2\}, \{3, 4, 5, 12, 20\},\{6, 52, 58, 77\}, \{7, 24, 26, 27, 38,70\}, \{8, 25, 31, 81\}, \{9, 22, 29\}, \{10, 15, 21, 68\}, \{11, 19, 67\}, \{13, 62, 75\}, \{14, 34\}, \{16, 64\}, \{17, 35, 54\}, \{18, 79\}, \{23, 43, 57\}, \{28, 51, 56, 60\}, \{30, 46, 48, 65\}, \{32, 83\}, \{33, 37, 44, 78, 82\},\{36, 45, 49, 66, 71\}, \{39, 40, 72\}, \{41, 55, 74\}, \{42, 47, 53, 59\}, \textcolor{blue}{\{50,
61, 69, 85\}, \{63, 76, 80, 84\}, \{73\}\}}}\\
Coloring scheme with 22 distinct colors (1 configuration):\\
\small{\{\{1, 2\}, \{3, 4, 5, 10, 21, 47, 51, 60\}, \{6, 67\}, \{7, 26, 27, 28, 46, 48, 68\}, \{8, 25, 29, 45, 49, 66, 69, 85\}, \{9, 22, 50, 61\}, \{11, 19, 52, 58\}, \{12, 20, 42, 53, 59, 82\}, \{13, 17, 54, 62\}, \{14, 55\}, \{15, 56\}, \{16, 43, 57, 83\}, \{18, 37, 63, 77\}, \{23, 64\}, \{24, 65\}, \{30, 38, 70, 81\}, \{31, 36, 39, 71, 76, 80\}, \{32, 40, 72,78\}, \{33, 41, 73, 79\}, \{34, 74\}, \{35, 75\}, \{44, 84\}\}}\\
... similarly for other coloring schemes with 21, 3, and 2 distinct colors
\end{center}

Observe that the coloring schemes with 25 distinct colors have arcs from the right side of $L_2$ (arcs $x_{48}-x_{85}$) that use different color from the set of colors used in the left side of $L_2$(arcs $x_1-x_{45}$) (highlighted in blue). Whereas the coloring scheme that uses 22 colors uses the same set of colors for both the right and the left side of $L_2$. This suggests that coloring schemes with $25$ distinct colors are helping us distinguish $L_1$ from $L_2$. Indeed, when we find the homomorphism set $Hom(Q(L_2),T)$ when $T$ is over $\mathbb{Z}_2$ or $\mathbb{Z}_3$, which does not have 25 distinct colors, we are unable to distinguish $L_2$ from $L_1$ using either the cardinality of homomorphism set or enhanced counting polynomial.
\begin{center}
\resizebox{18cm}{!}{
    \begin{tabular}{|c|c|c|c|c|}
    \hline
    Vector Field &  $\phi_E(L_1,T)$ & $\phi_E(L_2,T)$ & Dist. $L_2$ from $H$? & Dist. $L_2$ from $L_1$ \\
    \hline 
    $Z_2$ & $4 q + 18 q^2 + 6 q^4$ & $4 q + 18 q^2 + 6 q^4$ & Yes & No\\
    \hline
    $Z_3$ & $9 q + 72 q^2 + 24 q^3 + 48 q^9$ & $9 q + 72 q^2 + 24 q^3 + 48 q^9$ & Yes & No\\
    \hline 
    $Z_5$ & $25 q + 360 q^2 + 360 q^3 + 360 q^{21} + 120 q^{22}$  &  $25 q + 360 q^2 + 360 q^3 + 360 q^{21} + 120 q^{22} + 1200 q^{25}$ & Yes & Yes\\
    \hline
\end{tabular}
}
\end{center}
\FloatBarrier

It is also noteworthy that for a given symplectic quandle $T$, if a coloring with $q$ distinct colors exists in $L_1$ then it also exists in $L_2$, but the converse is not true. We will furnish formal proof of this assertion in the next section.\\

\noindent The Mathematica code files to solve equations for Hopf links and Allen-Swenberg links are published with open access (links shown below):\\
\small{\url{https://www.wolframcloud.com/obj/nilesh80/Published/SymplecticHopf2x2-Z5.nb}} \\
\small{\url{https://www.wolframcloud.com/obj/nilesh80/Published/SymplecticAllenSwT1-2x2-Z5.nb}}\\
\small{\url{https://www.wolframcloud.com/obj/nilesh80/Published/SymplecticAllenSwT2-2x2-Z5.nb}}

\subsection{Results with General Allen-Swenberg link}

Our main result that a symplectic quandle $T$ can distinguish all Allen-Swenberg links from the connected sum of Hopf links $H$ is stated in Proposition \ref{ppmains}. We need to establish a few intermediate results to prove Proposition \ref{ppmains}.

\begin{lemma} \label{lmf1eqf2}
For every $f_1 \in  Hom(Q(L_1),T)$ there exist $f_2 \in Hom(Q(L_2),T))$ such that $Im(f_2) = Im(f_1)$
\end{lemma}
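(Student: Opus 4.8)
The plan is to build, from any given $f_1 \in Hom(Q(L_1),T)$, an explicit coloring $f_2 \in Hom(Q(L_2),T)$ that agrees with $f_1$ on the arcs the two links share and that introduces no new colors. The starting point is structural: comparing the two crossing tables, $L_2$ is obtained from $L_1$ by only two modifications among $c_1$--$c_{45}$, namely $c_9$ now reads $x_{47}=x_6\triangleright x_{28}$ and $c_{44}$ now reads $x_{46}=x_{45}\triangleright x_{40}$, together with the insertion of the block of new crossings $c_{46}$--$c_{85}$ on the new arcs $x_{48},\dots,x_{85}$. This block is a relabeled copy of the Allen-Swenberg tangle $T_0$ already present as $c_6$--$c_{45}$ inside $L_1$. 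I would write $\tau$ for the arc correspondence carrying the original $T_0\subset L_1$ onto this new copy, so that $\tau$ sends $x_6\mapsto x_{67}$, $x_7\mapsto x_{48}$, $x_{28}\mapsto x_{68}$, $x_{23}\mapsto x_{64}$, $x_{19}\mapsto x_{58}$, and so on, arranged so that the tangle outputs of the new copy are again the arcs $x_3$ and $x_{27}$.

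First I would define $f_2$ by setting $f_2(x_i)=f_1(x_i)$ for $i\in\{1,\dots,45\}$, setting $f_2(x_{46})=f_1(x_{27})$ and $f_2(x_{47})=f_1(x_3)$ on the two bridging arcs, and setting $f_2(x_j)=f_1(\tau^{-1}(x_j))$ for every new arc $x_j$ with $48\le j\le 85$. Next I would verify the crossing relations of $L_2$ in three groups. The unchanged crossings among $c_1$--$c_{45}$ involve only $x_1,\dots,x_{45}$ and hold because $f_2=f_1$ there and $f_1\in Hom(Q(L_1),T)$. The two modified crossings hold by construction: for $c_9$ we get $f_2(x_6)\triangleright f_2(x_{28})=f_1(x_6)\triangleright f_1(x_{28})=f_1(x_3)=f_2(x_{47})$ using the original relation $c_9$ of $L_1$, and symmetrically for $c_{44}$. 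Finally, the new block $c_{46}$--$c_{85}$ holds because it is, term by term, the $\tau$-image of the relations $c_6$--$c_{45}$ that $f_1$ already satisfies.

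The step I expect to be the real obstacle is the compatibility of this new block with the bridging arcs at its boundary. The output crossings $c_{49}$ $(x_3=x_{67}\triangleright x_{68})$ and $c_{84}$ $(x_{27}=x_{85}\triangleright x_{78})$ reproduce $f_1(x_3)$ and $f_1(x_{27})$ directly from $L_1$'s $c_9$ and $c_{44}$, so they feed the correct values back into the unchanged crossings $c_2,c_4,c_5$. The input crossings are the delicate ones: crossing $c_{46}$ forces $f_2(x_{47})=f_1(x_6)\triangleright f_1(x_7)=f_1(x_5)$, whereas the modified $c_9$ forced $f_2(x_{47})=f_1(x_3)$; likewise $c_{64}$ forces $f_2(x_{46})=f_1(x_{23})\triangleright f_1(x_{19})=f_1(x_{26})$, against $f_2(x_{46})=f_1(x_{27})$ coming from the modified $c_{44}$. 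These two pairs of requirements are reconciled exactly by the identities $x_3=x_4=x_5$ and $x_{26}=x_{27}$ which, as recorded in the previous subsection, hold for every element of $Hom(Q(L_1),T)$. Geometrically they say that each through-strand of $T_0$ enters and leaves with the same color, which is precisely what permits an extra copy to be stacked in without disturbing the coloring; so the real work of the proof is to invoke (or, if a computation-free argument is wanted, to derive from the boundary equations $c_1$--$c_9$ and $c_2,c_{44}$) these structural identities.

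With compatibility in hand, $f_2$ is a well-defined element of $Hom(Q(L_2),T)$. For the image I would argue both inclusions: $f_2$ restricts to $f_1$ on $x_1,\dots,x_{45}$, giving $Im(f_1)\subseteq Im(f_2)$, while every remaining arc takes a value of the form $f_1(\,\cdot\,)$, giving $Im(f_2)\subseteq Im(f_1)$. Hence $Im(f_2)=Im(f_1)$, which is the assertion of the lemma.
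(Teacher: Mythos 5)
Your construction is correct and is essentially the paper's own proof: both extend $f_1$ identically on $x_1,\dots,x_{45}$, color the bridging arcs by $f_2(x_{47})=f_1(x_3)=f_1(x_5)$ and $f_2(x_{46})=f_1(x_{26})=f_1(x_{27})$ using the computationally established identities $x_3=x_4=x_5$ and $x_{26}=x_{27}$ on $Hom(Q(L_1),T)$, and pull the new arcs $x_{48},\dots,x_{85}$ back through the replica correspondence so that the relations $c_{46}$--$c_{85}$ reduce to the already-satisfied relations $c_6$--$c_{45}$, giving $Im(f_2)=Im(f_1)$ by the two inclusions you state. Your explicit isolation of the boundary compatibilities (at $c_{46}$, $c_{49}$, $c_{64}$, $c_{84}$) makes the role of those identities more visible than the paper's brief assertion that the equations ``become identical,'' but it is the same argument, not a different route.
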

\begin{proof}
Take any $f_1:(x_1^1, x_2^1,\cdots,x_{45}^1) \in  Hom(Q(L_1),T)$. We construct $f_2:(x_1^2, x_2^2,\cdots,x_{85}^2)$ such that $f_2 \in Hom(Q(L_2),T))$ and $x_i^2 \in \{x_1^1, x_2^1, \cdots, x_{45}^1\}$ for all $i = 1,2,\cdots,85$.\\

For each $i = 1,2,..,45$ assign $x_i^2 = x_i^1$ i.e. $(x_1^2, x_2^2,\cdots,x_{45}^2) \equiv (x_1^1,x_2^1, \cdots, x_{45}^1)$. \\

Next, we assign the two arcs that connect the right and the left part of $L_2$ as $x_{46}^2 = x_{26}^1$ and $x_{47}^2 = x_{5}^1$. Since $x_3^1 = x_5^1$ and $x_{26}^1 = x_{27}^1$ for all $f \in Hom(Q(L_1),T)$, we have $x_{46}^2 = x_{26}^1 = x_{27}^1$ and $x_{47}^2 = x_{3}^1 = x_{5}^1$. With these assignments equations $(c_1,\cdots, c_{45})$ of $Q(L_2)$ are satisfied because they become identical to $(c_1,\cdots, c_{45})$ of $Q(L_1)$, with corresponding arc values gives by $f_1 \in Hom(Q(L_1),T)$.\\

Next, we construct, $x_{48}^2,\cdots, x_{85}^2$ by assigning each of these variables to one of the variables in $\{x_{1}^2, \cdots x_{45}^2\} \equiv \{x_{1}^1, \cdots x_{45}^1\} $ such that equations $(c_{46},\cdots,c_{85})$ become identical to $(c_6,\cdots,c_{45})$ in the same order. At each step we maintain the previous assignments. The construction process is shown below:
\begin{align*}
&c_{46} (x_{47} = x_{67} \triangleright x_{48}) \equiv  c_{6}(x_5 = x_{6} \triangleright x_{7}) &\text{ if }& x_{48}^2 = x_7^2, x_{67}^2 = x_6^2\\
&c_{47} (x_{49} = x_{48} \triangleright x_{67}) \equiv c_{7}(x_8 = x_{7} \triangleright x_{6}) &&\text{ if } x_{49}^2 = x_8^2\\
& c_{48} (x_{69} = x_{68} \triangleright x_{67}) \equiv c_{8}(x_{29} = x_{28} \triangleright x_{6}) &&\text{ if } x_{69}^2 = x_{29}^2\\
& c_{49} (x_{3} = x_{67} \triangleright x_{68}) \equiv c_{9}(x_{47} = x_{6} \triangleright x_{28}) &&\text{ if } x_{68}^2 = x_{28}^2\\
& c_{50} (x_{50} = x_{49} \triangleright x_{48}) \equiv c_{10}(x_{9} = x_{8} \triangleright x_{7}) &&\text{ if } x_{50}^2 = x_{9}^2\\
&\cdots \cdots && \cdots \\
& c_{83} (x_{80} = x_{84} \triangleright x_{79}) \equiv c_{43}(x_{39} = x_{44} \triangleright x_{41}) &&\text{ if } x_{84}^2 = x_{44}^2\\
& c_{84} (x_{27} = x_{85} \triangleright x_{78}) \equiv c_{44}(x_{46} = x_{45} \triangleright x_{40}) &&\text{ if } x_{85}^2 = x_{45}^2\\
& c_{85} (x_{81} = x_{85} \triangleright x_{79}) \equiv c_{45}(x_{38} = x_{45} \triangleright x_{41}) &&\text{ if } x_{81}^2 = x_{38}^2
\end{align*}
I have not shown few assignments for brevity, but since the crossings on the right side of $L_2$ that is $c_{46}-c_{85}$ is a replica of crossings on the left side $c_6 - c_{45}$, there exists a corresponding map from arcs on the right side to those in the left side when the equations become identical.\\

The above assignments $f_2:(x_1^2,\cdots,x_{85}^2)$ satisfies all equations of $Q(L_2)$, because equations $c_{46}-c_{85}$ is equivalent to $c_6 - c_{45}$ (in the same order), which are already satisfied by $f_2$ in the previous step. Hence, $f_2 \in Hom(Q(L_2),T)$. By construction, $x_i^2 \in \{x_1^1,x_2^1, \cdots, x_{45}^1\}$, for all $i = 1,\cdots, 85$. This implies $Im(f_2) = Im(f_1)$. 
\end{proof}
\vspace{1em}
We can understand this more intuitively using Figure \ref{fig:as12} which shows how $L_1$ and $L_2$ are created from $C$ (shown on the left). 
\FloatBarrier
\begin{figure}[h]
\resizebox{16cm}{!}{
\begin{tabular}{c c c}
    \includegraphics[width = 4cm]{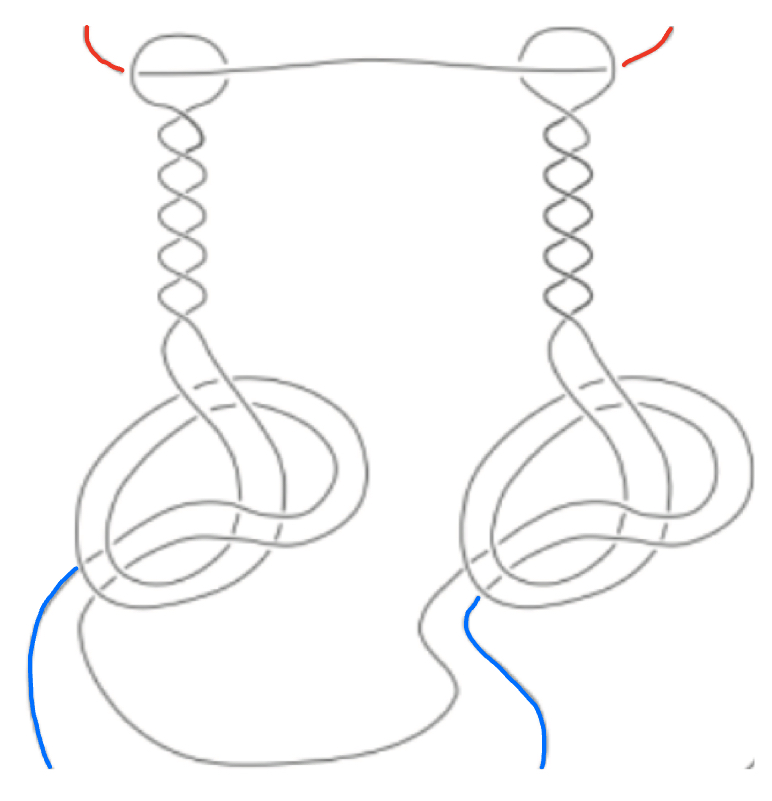} 
	& 
    \includegraphics[width = 6cm]{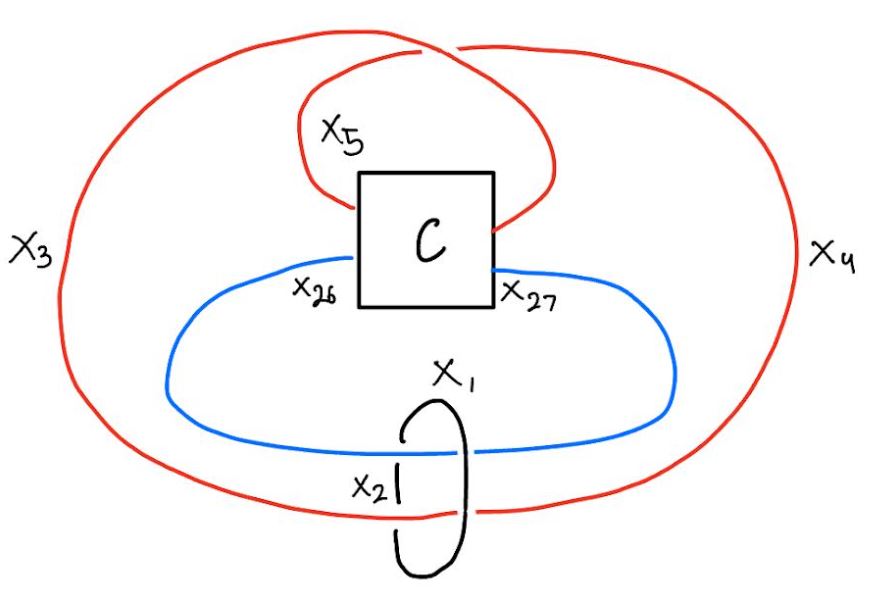} 
    & 
    \includegraphics[width = 6cm]{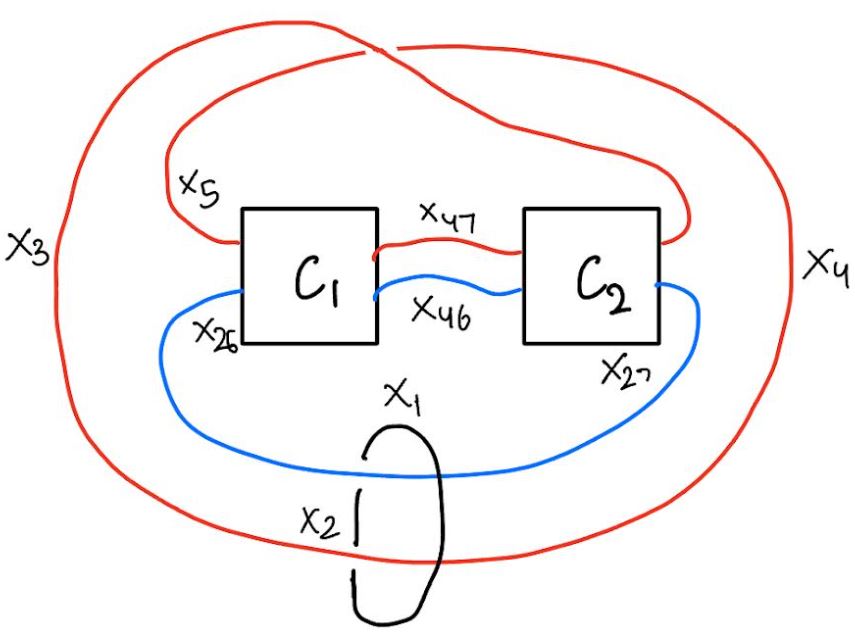} \\
    $C$ & $L_1$ & $L_2$ with $x_{47} = x_5$ and $x_{46} = x_{26}$
\end{tabular}
}
\caption{$L_1$ and $L_2$ generated from $C$ with connection as shown}\label{fig:as12}
\end{figure}
\FloatBarrier
$L_1$ contains a single instance of $C$ whose ends are connected as shown. We know that the coloring space of $L_1$ has $x_1 = x_2$ (shown black), $x_3 = x_4 = x_5$ (shown red) and $x_{26} = x_{27}$ (shown blue). $L_2$ contains two instances of $C$ (call it $C_1$ and $C_2$) that are connected as shown. Lemma \ref{lmf1eqf2} shows that if $L_2$ uses $x_{47} = x_5 = x_3$ (shown red) and $x_{46} = x_{26} = x_{27}$ (shown blue) then all arc variables of $C_2$ can be mapped to arc variables of $C_1$, and all coloring solutions of $L_1$ will satisfy $C_1$. This is intuitive because $C_2$ is a replica of $C_1$, and if both have the same end-points (i.e., same variables) then an identical set of coloring solutions will satisfy their equation sets. Furthermore, by equating $x_{47}$ to $x_5$ and $x_{46}$ to $x_{26}$, the end-points of $C_1$ becomes equivalent to that of $C$ in $L_1$. This implies that all coloring solutions of $L_1$ satisfy $C_1$. Note that $L_2$ may have coloring solutions with $x_{47} \neq x_5$ or $x_{46} \neq x_{26}$.\\

The above result is corroborated by the solutions we got from Mathematica. The configurations that have the same number of colors in $L_1$ and $L_2$ maintain $x_{46} = x_{26}$ and $x_{47} = x_5$, whereas the configurations that are new for $L_2$ (e.g. $|Im(f)| = 25$) do not maintain these relationships. We extend lemma \ref{lmf1eqf2} for any link (denoted as $L_n$) within the Allen-Swenberg infinite series. As elucidated in Figure \ref{fig:asn}, the link $L_n$ is created by replicating $C$ $n$ times, denoted as $C_1,\cdots,C_n$, and interconnecting them as shown. 
\FloatBarrier
\begin{figure}[h]
\centering
 \includegraphics[width = 9cm]{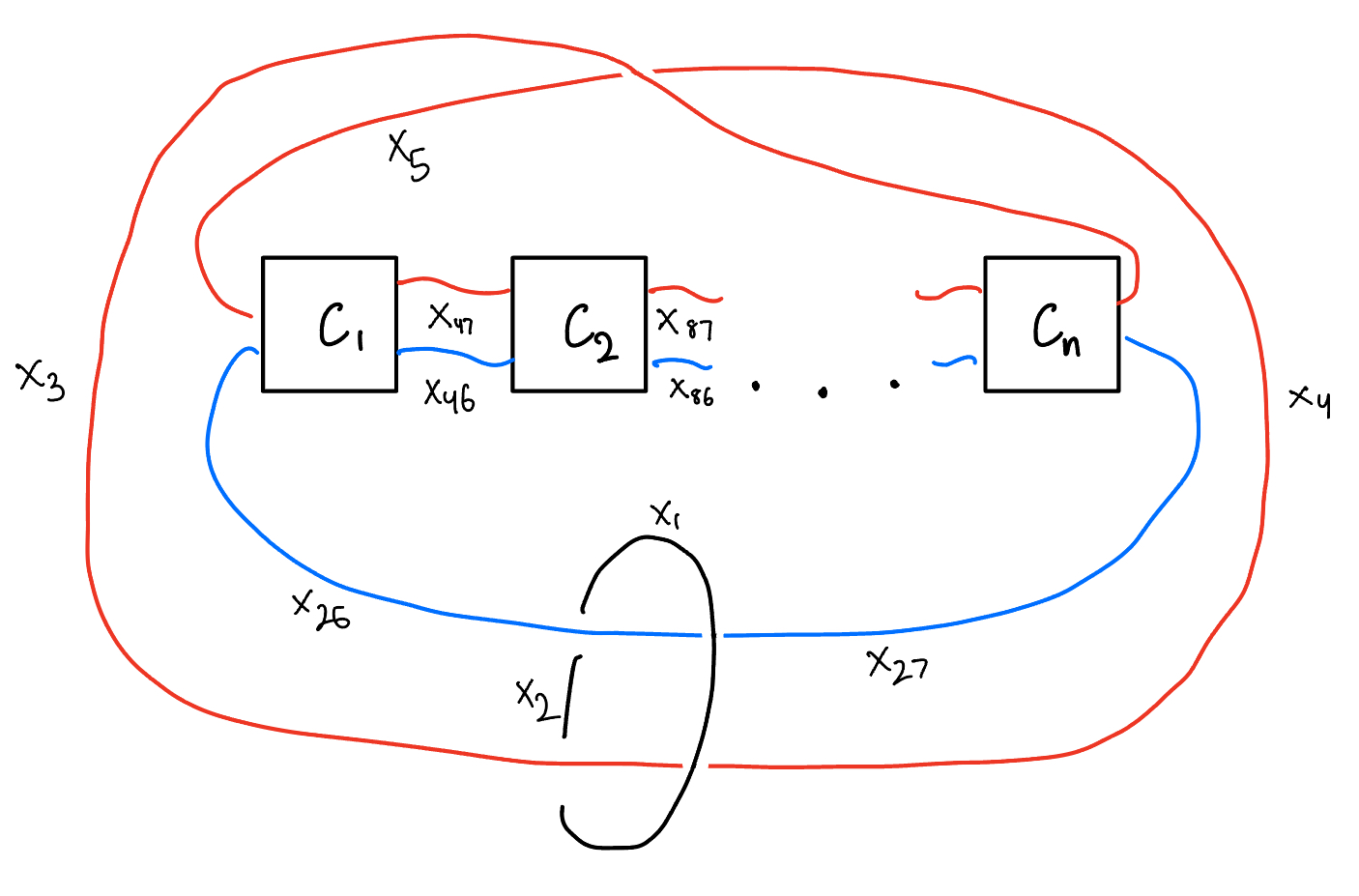}
\caption{Allen-Swenberg link $L_n$ generated by repeating $C$ $n$ times and setting $x_5 = x_{47} = x_{87} = \cdots = x_3$ and $x_{26} = x_{46} = x_{86} = \cdots = x_{27}$}\label{fig:asn}
\end{figure}
\FloatBarrier

\begin{lemma} \label{lmf1eqfn}
For every $f_1 \in  Hom(Q(L_1),T)$ there exist $f_n \in Hom(Q(L_n,T))$ such that $Im(f_n) = Im(f_1)$ for all $n \in \mathbb{Z}_{+}$
\end{lemma}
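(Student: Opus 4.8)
The plan is to prove Lemma~\ref{lmf1eqfn} by induction on $n$, with Lemma~\ref{lmf1eqf2} serving as both the $n=2$ base step and the template for the inductive step. The base case $n=1$ is trivial: take $f_n = f_1$. So I would assume, as the inductive hypothesis, that for every $f_1 \in Hom(Q(L_1),T)$ there exists $f_{n-1} \in Hom(Q(L_{n-1}),T)$ with $Im(f_{n-1}) = Im(f_1)$. I would then produce $f_n \in Hom(Q(L_n),T)$ with $Im(f_n) = Im(f_1)$ by extending $f_{n-1}$ across exactly one additional copy of the cassette $C$, exploiting the fact that $L_n$ is the chain $C_1 - C_2 - \cdots - C_n$ closed through the head, as depicted in Figure~\ref{fig:asn}.

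The construction mirrors Lemma~\ref{lmf1eqf2}. Recall the invariant relations established for $L_1$: every coloring satisfies $x_1 = x_2$, $x_3 = x_4 = x_5$, and $x_{26} = x_{27}$; write $r := x_3^1$ for the common ``red'' connector value and $b := x_{27}^1$ for the common ``blue'' connector value. I would keep the values of $f_{n-1}$ on the head and on the copies $C_1,\dots,C_{n-1}$, and define the new copy $C_n$ as a \emph{replica} of $C_1$: assign to each arc of $C_n$ the value that $f_1$ gives to the positionally corresponding arc of $C_1$ under the canonical cassette identification (the one matching the crossings $c_6,\dots,c_{45}$ of $C_1$ with those of $C_n$). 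In particular the two connector arcs entering and leaving $C_n$ receive the values $r$ and $b$, which are exactly the values already carried by every connector along the chain by Figure~\ref{fig:asn}; this is what makes the insertion of $C_n$ ``transparent'' and lets the two severed ends be re-glued consistently.

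It then remains to check that $f_n$ is a homomorphism and that $Im(f_n)=Im(f_1)$. The head equations and the equations internal to $C_1,\dots,C_{n-1}$ hold because $f_n$ agrees there with $f_{n-1}$, which solves $Q(L_{n-1})$; since inserting $C_n$ with connector values $(r,b)$ does not alter any boundary value seen by the head or by $C_{n-1}$, these equations are unchanged. The equations internal to $C_n$ reduce, term by term, to the cassette equations $c_6,\dots,c_{45}$ of $Q(L_1)$ under the replica identification, and these are satisfied by $f_1$. For the image, every value assigned by $f_n$ lies in $Im(f_1)$ (the old part by the inductive hypothesis, the new copy by the replica definition), so $Im(f_n)\subseteq Im(f_1)$; conversely $f_n$ extends $f_{n-1}$, so $Im(f_1)=Im(f_{n-1})\subseteq Im(f_n)$, giving equality. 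I expect the main obstacle to be purely organizational rather than conceptual: one must fix an unambiguous indexing of the arcs of the $k$-th copy $C_k$, verify that the canonical cassette identification is compatible with the crossing relations at every seam, and carry the invariant ``all red connectors equal $r$, all blue connectors equal $b$'' through the induction, since that invariant is precisely what guarantees the gluing is valid at each stage.
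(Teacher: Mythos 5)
Your proposal is correct and follows essentially the same route as the paper: induction on $n$ with Lemma~\ref{lmf1eqf2} as the base case, extending the coloring by a replica copy of $C$ colored via $f_1$, and carrying the connector invariant ($x_3 = x_5 = x_{47} = \cdots$ and $x_{26} = x_{27} = x_{46} = \cdots$) through the induction --- which, as you rightly flag, is the strengthening of the inductive hypothesis that makes the gluing valid, exactly as in the paper's argument. The only cosmetic difference is that you append the new copy $C_n$ at the tail of the chain while the paper inserts it between $C_1$ and $C_2$ (so that the equations of $C_3,\dots,C_{k+1}$ are untouched verbatim); both splices are transparent precisely because all connectors carry the common values $r$ and $b$.
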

\begin{proof}
This is true for $n = 2$ from Lemma \ref{lmf1eqf2}. Suppose it is true for $n = k$. This implies that if $x_5 = x_{47} = x_{87} = \cdots = x_3$ and $x_{26} = x_{46} = x_{86} = \cdots = x_{27}$ all arcs in $C_2, C_3, \cdots, C_k$ can be mapped to arcs in $C_1$ whose values are equal to $f_1$. Now we insert another $C$ between $C_1$ and $C_2$ in the above figure, and call this new part $C_2$. Again using Lemma \ref{lmf1eqf2}, all arc variables of $C_2$ can be mapped to $C_1$ if their end points are same i.e. $x_{47} = x_5 = x_{87}$ and $x_{46} = x_{26} = x_{86}$. This insertion does not change any of the arcs of $C_3$ thus preserving the previous solutions for $C_3, C_4,\cdots, C_{k+1}$. This means that all arcs in $C_2, C_3,\cdots, C_{k+1}$ are mapped to arcs of $C_1$ whose arc values are equal to $f_1$. So the lemma is true for $k+1$. By induction, this lemma is true for all $n \in \mathbb{Z}_{+}$.
\end{proof}


\begin{proposition}\label{ppmains}
\textit{Symplectic quandle $T$ can distinguish $H$ from all links in the infinite series of Allen-Swenberg links.}
\end{proposition}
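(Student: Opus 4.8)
The plan is to reduce the proposition to a degree comparison between the two enhanced quandle counting polynomials $\phi_E(H,T)$ and $\phi_E(L_n,T)$, exhibiting a single monomial that separates them uniformly in $n$. The key structural fact, already established in Section~5.1, is that $\phi_E(H,T) = 25q + 360q^2 + 840q^3$ has degree exactly $3$: every $f \in Hom(Q(H),T)$ satisfies $x_2 = x_3$, which forces $|Im(f)| \le 3$, so no higher power of $q$ can occur. Hence it suffices to produce, for each $n \in \mathbb{Z}_+$, at least one homomorphism in $Hom(Q(L_n),T)$ whose image has strictly more than $3$ elements; such a homomorphism contributes a monomial $q^{|Im|}$ of degree $>3$ to $\phi_E(L_n,T)$, a power that is simply absent from $\phi_E(H,T)$.

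The steps I would carry out, in order, are as follows. First I would recall from the explicit computation in Section~5.2 that $\phi_E(L_1,T) = 25q + 360q^2 + 360q^3 + 360q^{21} + 120q^{22}$; the presence of the term $120q^{22}$ certifies the existence of some $f_1 \in Hom(Q(L_1),T)$ with $|Im(f_1)| = 22$. Next I would invoke Lemma~\ref{lmf1eqfn}: for every $n \in \mathbb{Z}_+$ there exists $f_n \in Hom(Q(L_n),T)$ with $Im(f_n) = Im(f_1)$, and therefore $|Im(f_n)| = 22$. Consequently the monomial $q^{22}$ appears with strictly positive coefficient in $\phi_E(L_n,T)$, so $\deg \phi_E(L_n,T) \ge 22$. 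Since $\deg \phi_E(H,T) = 3 < 22$, the polynomials $\phi_E(H,T)$ and $\phi_E(L_n,T)$ differ, and the enhanced quandle counting invariant associated to $T$ therefore assigns distinct values to $H$ and to $L_n$. As this argument is valid for every $n$, the proposition follows.

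The bulk of the genuine work has already been discharged by Lemma~\ref{lmf1eqfn}, which guarantees that the high-multiplicity colorings of $L_1$ propagate to every member of the family; granting that lemma, the proposition is essentially a corollary obtained by comparing degrees. The only point requiring care—and thus the nearest thing to an obstacle—is to make the separation \emph{uniform} in $n$ rather than arguing case by case: one must fix a single source homomorphism $f_1$ (here the one realizing $|Im(f_1)| = 22$) and observe that Lemma~\ref{lmf1eqfn} transports it to every $L_n$ at once, so the same separating monomial $q^{22}$ works across the whole infinite family. Equivalently one could use $q^{21}$ via the $360q^{21}$ term of $\phi_E(L_1,T)$; either choice exceeds $\deg \phi_E(H,T) = 3$, which is all that is needed.
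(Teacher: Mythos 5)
Your proposal is correct and follows essentially the same route as the paper's own proof: both fix the homomorphism $f_1 \in Hom(Q(L_1),T)$ with $|Im(f_1)| = 22$, transport it to every $L_n$ via Lemma~\ref{lmf1eqfn}, and conclude by comparing $\deg \phi_E(L_n,T) \geq 22$ against $\deg \phi_E(H,T) = 3$. Your write-up is in fact slightly more careful than the paper's, since you explicitly ground the degree bound on $\phi_E(H,T)$ in the structural relation $x_2 = x_3$ and note the uniformity in $n$, but the argument is the same.
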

\begin{proof}
We have demonstrated that the maximum number of colors used in the homomorphism set $Hom(Q(L_1),T)$ is $22$. According to Lemma \ref{lmf1eqfn}, if a solution with $22$ distinct colors exists in $Hom(Q(L_1),T)$ then a solution with the same set of colors must also exist in $Hom(Q(L_n),T)$ for all $n \in \mathbb{Z}_{+}$. This implies that the degree of $\phi_E(L_n, T) \geq 22$ for all $n \in \mathbb{Z}_{+}$. Since the maximum number of colors in $Hom(Q(H),T)$ is $3$, the degree of $\phi_E(H,T)$ is $3$, which implies that the $\phi_E(H,T)$ is different from $\phi_E(L_n,T)$ for all $n \in \mathbb{Z}_{+}$. Hence, $T$ can distinguish $H$ from all links in the Allen-Swenberg series. 
\end{proof}

The next question is whether each link in the infinite series can be differentiated from one another using a symplectic quandle ($T$). We have shown that it is true for the first two links in the series, if $T$ is defined over the field $\mathbb{Z}_{5}$. However,this is not true if $T$ is defined over $\mathbb{Z}_{2}$ or $\mathbb{Z}_{3}$. The larger field size increases the maximum number of colors, thereby enabling the subquandles on the larger links to be colored differently.  Validating this for the third link in the series requires significantly enhanced computing power and memory, as we need to increase the field size. This could be taken up as a future work. Given that $L_2$ is differentiated from $L_1$ based on coloring schemes, in which the arcs in the right part ($C_2$ of $L_2$) use different set of colors from the arcs in the left part ($C_1$ of $L_2$), it is plausible that subsequent links can have different set of colors for each of $C_1, C_2,\cdots,C_n$. Consequently, if the field size is sufficiently large, these links could be differentiated from one another. Based on this observation, I make the following conjecture. 
\begin{conjecture}
\textit{There exists a symplectic quandle $T$, which can distinguish all links in the Allen-Swenberg series from the connected sum of Hopf links, and from each other, using the enhanced quandle counting invariant.}  
\end{conjecture}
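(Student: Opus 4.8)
The plan is to fix once and for all the single quandle $T = (\mathbb{Z}_5)^2$ with the standard form used above, and to split the statement into its two halves: separating every $L_n$ from $H$, and separating the links $L_n$ from one another. The first half is already essentially Proposition \ref{ppmains}: Lemma \ref{lmf1eqfn} forces a coloring with $22$ distinct colors to persist in $Hom(Q(L_n),T)$ for every $n$, so $\deg \phi_E(L_n,T) \geq 22$, whereas $\deg \phi_E(H,T) = 3$; hence $\phi_E(L_n,T) \neq \phi_E(H,T)$ for all $n$. The entire difficulty therefore lies in the second half, and that is where a genuinely new idea is needed.

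The key conceptual move I would make is to \emph{stop trying to separate the $L_n$ by degree}. Over a fixed field $\mathbb{Z}_p$ the number of available colors is capped at $p^2$, so $\deg\phi_E(L_n,T)$ saturates and cannot distinguish infinitely many links; the authors' own remark about needing to enlarge the field reflects exactly this degree-based obstruction, but enlarging the field per link would not yield the \emph{single} quandle the conjecture demands. Instead I would separate the $L_n$ through the total homomorphism count $\phi_E(L_n,T)\big|_{q=1} = |Hom(Q(L_n),T)|$, which is \emph{not} bounded by the field size and already jumps from $1225$ to $13125$ between $L_1$ and $L_2$. If this count is injective in $n$, then the polynomials $\phi_E(L_n,T)$ are pairwise distinct because they disagree already at $q=1$, and the conjecture follows with the one quandle $T$.

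To analyze the count I would exploit the self-similar construction of Figure \ref{fig:asn}, which realizes $L_n$ as a cyclic chain of $n$ identical blocks $C$, each glued to its neighbour along the two distinguished arcs carrying the ``red'' class ($x_3=x_4=x_5$) and the ``blue'' class ($x_{26}=x_{27}$). I would introduce a transfer matrix $M$ indexed by boundary color pairs $(r,b)\in T\times T$, with entry $M_{(r,b),(r',b')}$ equal to the number of colorings of a single block $C$ having left boundary $(r,b)$ and right boundary $(r',b')$. Each such entry is the solution count of a single small nonlinear system over $\mathbb{Z}_5$, computable by exactly the divide-and-conquer method already used in the paper, so $M$ is an explicit $625\times 625$ nonnegative integer matrix. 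Closing the chain then gives $|Hom(Q(L_n),T)| = \operatorname{tr}(M^n) = \sum_i \lambda_i^{\,n}$, reducing the injectivity of the count to a spectral statement about $M$. If $M$ is primitive, Perron--Frobenius supplies a unique dominant real eigenvalue $\lambda_1>1$ strictly exceeding all others in modulus, whence $\operatorname{tr}(M^n)=\lambda_1^{\,n}(1+o(1))$ is eventually strictly increasing; combined with the finitely many base cases checked directly (the data already give $L_1<L_2$), this would yield injectivity for all $n$ and complete the proof.

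The main obstacle is precisely the spectral input: I must guarantee that $M$ has a strictly dominant eigenvalue. A clean way to certify this is to check that $M^k$ is strictly positive for some explicit $k$ (primitivity), which converts the difficulty into a finite, if large, computation rather than a purely theoretical one; but two genuine gaps remain. First, $M$ is plausibly reducible (trivial/monochromatic boundaries and the zero vector give invariant blocks), so I would have to isolate the block governing the asymptotics and rule out subdominant eigenvalues of equal modulus that could make $\operatorname{tr}(M^n)$ non-monotone or cause a collision $\operatorname{tr}(M^m)=\operatorname{tr}(M^n)$ for distinct $m,n$ before monotonicity sets in. Second, if such a collision of total counts does occur, the fallback of refining $M$ to track the multiset of colors used on each block fails, because ``number of distinct colors over the whole link'' is a \emph{global} statistic that does not localize through a transfer matrix; recovering $\phi_E(L_n,T)$ itself from block data is exactly the part that resists a tensor-product argument, and that is where I expect the real work to concentrate.
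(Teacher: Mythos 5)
First, a calibration point: this statement is left as an open conjecture in the paper --- there is no proof to match your attempt against. The paper's support for it is purely empirical and heuristic: the computation that $(\mathbb{Z}_5)^2$ separates $L_1$ from $L_2$ while $(\mathbb{Z}_2)^2$ and $(\mathbb{Z}_3)^2$ do not, plus the observation that the $25$-color solutions of $L_2$ use disjoint palettes on the two blocks, suggesting that large enough fields let the blocks $C_1,\dots,C_n$ be colored with different color sets; even $L_3$ is explicitly deferred to future work. Your first half is sound but is not new --- it is exactly Proposition \ref{ppmains} via Lemma \ref{lmf1eqfn}. Your second half (a transfer matrix $M$ over boundary states $(r,b)\in T\times T$, with pairwise separation of the $L_n$ read off from the homomorphism count at $q=1$) is a genuinely different and more structural route than the paper's ``enlarge the field'' heuristic, and its virtue is real: it correctly abandons degree-based separation, which provably saturates at $|T|$ over a fixed field, and it would, if completed, deliver the single fixed quandle the conjecture demands rather than a field growing with $n$.

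But as a proof it has concrete gaps beyond the two you flag. (i) The identity $|Hom(Q(L_n),T)|=\operatorname{tr}(M^n)$ is wrong as stated: the blocks form an open chain whose two extreme ends close through the ambient clasp region (crossings $c_1$--$c_5$ and the modified crossings of type $c_9$, $c_{44}$), so the count is a bilinear form $u^{T}M^{n-1}v$ with boundary vectors determined by the closure equations, not a trace; this is repairable and leaves the Perron--Frobenius asymptotics intact, but it must be repaired before any spectral claim. (ii) The load-bearing hypothesis --- that the count is injective, or even eventually strictly increasing, in $n$ --- is not merely unverified; the paper's own tables warn against it. At $q=1$ the counts coincide for $H$, $L_1$, and $L_2$ over both $(\mathbb{Z}_2)^2$ ($28$ each) and $(\mathbb{Z}_3)^2$ ($153$ each), so the reachable part of the corresponding transfer matrix exhibits no growth there; and even over $(\mathbb{Z}_5)^2$ the count already collides, since $|Hom(Q(H),T)|=|Hom(Q(L_1),T)|=1225$. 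You have exactly one data point of growth ($1225\to 13125$), and nothing excludes imprimitivity of the relevant block of $M$ (equal-modulus eigenvalues $\lambda_1\zeta$, $\zeta$ a root of unity), which could make $u^{T}M^{n-1}v$ eventually periodic or produce collisions $m\neq n$. (iii) Your fallback pessimism is overstated in one direction: $|Im(f)|$ \emph{does} localize if you enlarge the state space to pairs (boundary colors, subset of $T$ used so far), a finite space of size $625\cdot 2^{25}$, so each coefficient of $\phi_E(L_n,T)$ satisfies a fixed linear recursion in $n$; this is theoretically clean but computationally hopeless, and it still requires the same unproven nondegeneracy. Net assessment: your half one reproduces the paper's proposition; your half two is a plausible research program whose central spectral-growth step is precisely the open content of the conjecture, so the statement remains unproved --- consistent with the paper, which claims no more.
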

\section{Conclusion}
We have clearly established that a symplectic quandle is capable of distinguishing every link within the Allen-Swenberg infinite series from the connected sum of Hopf links, utilizing the \emph{enhanced quandle counting invariant}. Hence, the symplectic quandle paired with the Alexander Conway polynomial is a promising candidate to detect causality in (2+1)-dimensional globally hyperbolic spacetime. We also find that a symplectic quandle in $(\mathbb{Z}_5)^2$ can differentiate between the first two links in the Allen-Swenberg series. Given this evidence, we conjecture that there exists a symplectic quandle that can distinguish all Allen-Swenberg links from each other, which are indistinguishable using Alexander-Conway polynomial.
\section{Acknowledgements}
This project was completed in the Summer of 2023 as a part of the Advanced Mathematics course by Horizon Academic Research Program. The program was supervised by Professor Vladimir Chernov of Dartmouth College and Professor Emanuele Zappala of Yale University. I would like to thank Professor Chernov and Professor Zappala for their constant support in this project.

\bibliography{sc.bib}

\end{document}